\theoremstyle{plain}
\newtheorem{theorem}{Theorem}[section]
\theoremstyle{plain}
\newtheorem{lemma}[theorem]{Lemma}
\theoremstyle{plain}
\newtheorem{corollary}[theorem]{Corollary}
\theoremstyle{definition}
\newtheorem{definition}[theorem]{Definition}
\theoremstyle{plain}
\newtheorem{proposition}[theorem]{Proposition}
\theoremstyle{remark}
\theoremstyle{definition}
\newtheorem{example}[theorem]{Example}
\theoremstyle{plain}
\theoremstyle{plain}
\newtheorem{conjecture}[theorem]{Conjecture}
\theoremstyle{plain}
\title{Asymptotic dimension of fuzzy metric spaces}
\author{Pawel Grzegrzolka}
\address{Stanford University, 450 Jane Stanford Way, Stanford, CA, 94305, USA}
\email{pawelg@stanford.edu}
\keywords{asymptotic dimension, fuzzy metric spaces, dimension theory, large-scale geometry, coarse topology, topology}
\thanks{This research did not receive any specific grant from funding agencies in the public, commercial, or not-for-profit sectors.}
\subjclass[2020]{03E72, 54F45, 54A40, 51F30} 
\begin{document}

\begin{abstract}
In this paper, we define asymptotic dimension of fuzzy metric spaces in the sense of George and Veeramani. We prove that asymptotic dimension is an invariant in the coarse category of fuzzy metric spaces. We also show several properties of asymptotic dimension in the fuzzy setting which resemble the properties of asymptotic dimension in the metric setting. We finish with calculating asymptotic dimension of a few fuzzy metric spaces.
\end{abstract}

\maketitle
\tableofcontents

\section{Introduction}
Fuzzy sets were introduced by Zadeh in \cite{Zadeh} to capture the lack of precisely defined criteria of membership. In the usual set theory, given a set $X$, a subset $A$ of $X$, and a point $x_0$ in $X$, we have either $x_0 \in A$ or $x_0 \notin A$. This allows us to describe the set $A$ by its characteristic function $1_A:X \to \{0,1\}$ such that $1_A(x)=1$ if $x \in A$ and $1_A(x)=0$ if $x \notin A$. Zadeh extended the notion of a set by defining a fuzzy set to be a function $f:X \to [0,1]$. The value of $f$ at a particular $x$ is usually understood as the degree of certainty that $x$ belongs to that fuzzy set. Since their introduction, fuzzy sets have been a subject of a remarkably active area of research (for some recent examples, see \cite{fuzzyset5},\cite{fuzzyset1},\cite{fuzzyset2},\cite{fuzzyset3},\cite{fuzzyset4}). 

The notion of fuzziness has been incorporated to the metric setting by Kramosil and Michalek in \cite{michalek}. George and Veeramani slightly modified Kramosil and Michalek's definition of a fuzzy metric space to show that every fuzzy metric space induces a Hausdorff topology (see \cite{main_paper}). The modified version gained much popularity (see for example \cite{popular1},\cite{popular},\cite{Park},\cite{popular2},\cite{open}) and is now considered to be the appropriate notion of metric fuzziness (see \cite{metrizable},\cite{Sapena}). Fuzzy metric spaces account for the uncertainty of measurements of the distance between any two points. In particular, given a parameter $t>0$ and a pair of points $x$ and $y$, a fuzzy metric assigns a "degree of certainty" (a number between $0$ and $1$) such that the distance between $x$ and $y$ is smaller than $t$.

Large-scale geometry (also called coarse topology) has its origins in Mostow's work on rigidity and \v Svarc, Milnor, and Wolf's work on growth of groups (see \cite{nowak}). Large-scale geometry focuses on properties of spaces that are invariant under small perturbations of these spaces (i.e., under quasi-isometries or, more generally, coarse equivalences). One example of such a property is asymptotic dimension (see \cite{asymptotic_dimension},\cite{bedlewo}). The usefulness of asymptotic dimension became apparent due to Yu who proved that the coarse Baum-Connes conjecture holds for proper metric spaces with finite asymptotic dimension (\cite{Yu}). Consequently, he showed that the Novikov Conjecture holds for finitely generated groups whose classifying space has the homotopy type of a finite CW-complex and whose asymptotic dimension is finite when the group is equipped with its word metric (\cite{Yu}). 

Despite its popularity, large-scale geometry has not been studied in the fuzzy metric setting (with the exception of a short note by Zarichnyi; see \cite{rzeszow}). In this paper, we introduce the study of fuzzy metric spaces from the large-scale point of view. In particular, we define the coarse category of fuzzy metric spaces, and we translate many large-scale metric notions to the fuzzy setting. We pay special attention to asymptotic dimension of fuzzy metric spaces. Among other results, we show that asymptotic dimension is an invariant in the coarse category of fuzzy metric spaces, and that asymptotic dimension of fuzzy metric spaces has many properties similar to properties of asymptotic dimension in the metric setting (see for example Theorem \ref{bigtheorem}). We also explicitly calculate asymptotic dimension of several fuzzy metric spaces.

The outline of this paper is as follows. In section \ref{preliminaries}, we review the necessary definitions surrounding fuzzy metric spaces in the sense of George and Veeramani. We also show that an additional condition must be imposed on a  continuous $t$-norm of fuzzy metric spaces to ensure that a finite union of bounded sets is bounded. In section \ref{defofasdim}, we define asymptotic dimension of fuzzy metric spaces, and we prove that asymptotic dimension of the standard fuzzy metric space agrees with asymptotic dimension of the inducing metric space. In section \ref{consequences}, we show several properties of asymptotic dimension of fuzzy metric spaces which are similar to properties of asymptotic dimension in the metric setting. In particular, we prove a slightly weaker fuzzy version of Theorem 19 of \cite{asymptotic_dimension} which lists many equivalent definitions of asymptotic dimension in the metric setting. In section \ref{categorysection}, we establish the coarse category of fuzzy metric spaces, and we prove that the asymptotic dimension of fuzzy metric spaces in an invariant in that category. Finally, in section \ref{examples} we calculate asymptotic dimension of several fuzzy metric spaces. In particular, we show that any non-Archimedean fuzzy metric space has asymptotic dimension $0$.

Throughout the paper, we will assume that $\mathbb{N}=\{n \mid n\geq 1, n \in \mathbb{Z}\}$.

\section{Preliminaries}\label{preliminaries}
In this section, we recall basic definitions from \cite{main_paper} surrounding fuzzy metric spaces. We also recall a few known relationships between metric notions and fuzzy metric notions. Finally, we show that there exist fuzzy metric spaces for which the union of two bounded sets can be unbounded, and we find the condition on the continuous $t$-norm that excludes such examples.

\begin{definition}
A binary operation 
\[*:[0,1] \times [0,1] \to [0,1]\]
is called a \textbf{continuous $t$-norm} if
\begin{enumerate}
\item $*$ is associative and commutative,
\item $*$ is continuous,
\item for all $a\in [0,1],$ $a * 1 = a$,
\item for all $a,b,c,d \in [0,1]$, $a\leq c, b \leq d \Longrightarrow a*b \leq c*d.$
\end{enumerate}
\end{definition}
Notice that conditions $(1), (2),$ and $(3)$ imply that $([0,1],*)$ is a topological monoid. Typical examples of continuous $t$-norms are $a*b=ab$, $a*b=\min\{a,b\}$, and $a*b=\max\{0,a+b-1\}.$ 
\begin{definition}\label{fuzzydef}
 A triple $(X,M,*)$ is a \textbf{fuzzy metric space} if $X$ is a set, $*$ is a continuous $t$-norm, and $M$ is a function
\[M: X \times X \times (0, \infty) \to [0,1]\]
satisfying the following conditions for all $x,y,z \in X$ and all $s,t>0$:
\begin{enumerate}
\item $M(x,y,t)>0$,
\item $M(x,y,t)=1$ if and only if $x=y$,
\item $M(x,y,t)=M(y,x,t)$,
\item $M(x,y,t)*M(y,z,s)\leq M(x,z,t+s)$,
\item $M(x,y,\cdot):(0,\infty) \to [0,1]$ is continuous.
\end{enumerate}
\end{definition}
The adjective "fuzzy" in the above definition comes from the fact that the function $M$ is a fuzzy set. The value of $M(x,y,t)$ is usually understood as the degree of certainty that the distance between $x$ and $y$ is less than $t$. It was shown in \cite{grabiec} that $M(x,y, \cdot)$ is  non-decreasing for all $x,y$ in $X$.

\begin{example}
Let $(X,M,*)$ be a fuzzy metric space. Let $Y\subseteq X$. Then $(Y,M_Y,*)$ is a fuzzy metric space, where $M_Y$ is the restriction of $M$ to $Y \times Y \times (0, \infty)$. We call this fuzzy metric space the \textbf{subspace fuzzy metric space}.
\end{example}

\begin{example}
Let $(X,d)$ be a metric space. Let $a*b=ab$ for any $a,b \in [0,1]$. Define
\[M(x,y,t)=\frac{t}{t+d(x,y)}.\]
Then $(X,M,*)$ is a fuzzy metric space. We call this fuzzy metric space the \textbf{standard fuzzy metric space} and $M$ the \textbf{standard fuzzy metric} induced by the metric $d$. 
\end{example}
It is worth noting that there exist fuzzy metric spaces which are not standard fuzzy metric spaces for any metric (see Example 2.11 in \citep{main_paper}).
\begin{definition}
Let $(X,M,*)$ be a fuzzy metric space. Then the  ball $B(x,r,t)$ with center $x\in X$ and  $r \in (0,1),$ $t>0$ is defined by
\[B(x,r,t)=\{y \in X \mid M(x,y,t)>1-r\}.\]
\end{definition}
It was shown in \cite{main_paper} that the collection of all balls in a fuzzy metric space $(X,M,*)$ induces a first countable and Hausdorff topology on $X$. In fact, this topology is always metrizable (see \cite{metrizable}). When the fuzzy metric space is the standard fuzzy metric space induced by the metric $d$, then the topology induced by the fuzzy metric coincides with the topology induced by the metric $d$ (see \cite{main_paper}).
\begin{definition}
Let $(X,M,*)$ be a fuzzy metric space. A subset $A$ of $X$ is \textbf{bounded} provided there exist $r \in (0,1),$ $t>0$ such that
\[M(x,y,t)>1-r\]
for all $x,y \in A$.
\end{definition}
It was shown in \citep{main_paper} that the bounded sets of a standard fuzzy metric space coincide with the bounded sets of the inducing metric space. However, certain fuzzy metric spaces do not behave as intuitively as one would expect when it comes to boundedness. For example, there are fuzzy metric spaces where the union of two bounded sets is unbounded, even if these two bounded sets share a common point, as the following example shows.

\begin{example}
Let $X=\mathbb{N}$. Equip $X$ with the continuous $t$-norm defined by $a*b=\max\{0, a+b-1\}$. Define $M: X \times X \times (0,\infty) \to [0,1]$ by

\[ M(x,y,t)=\begin{cases} 
      1 & \text{if } x=y \\
     \frac{1}{2} & \text{if } x \neq y, x \neq 1, y \neq 1 \\
      \frac{1}{y} & \text{if } x=1, y \neq 1 \\
      \frac{1}{x} & \text{if } x\neq 1, y = 1 
   \end{cases}
\]
It is straightforward to verify that $M$ satisfies all the axioms of a fuzzy metric space. Consider $A=\{1,2\}$ and $B=\{n \mid n \geq 2, n \in X\}$. It is easy to see that both $A$ and $B$ are bounded and share a common point $x=2$. However, $A \cup B$ is unbounded, as $M(1,x,t) \to 0$ when $x \to \infty$.
\end{example}
In the above example, the 4th axiom of a fuzzy metric space was vacuously satisfied, since the left side of that axiom was always zero for distinct $x,y$, and $z$:
\[M(x,y,t)*M(y,z,s)\leq M(x,z,t+s).\]
In other words, the fuzzy equivalent of the triangle inequality did not provide any help in estimating the value of $M(x,z,t+s)$ and consequently allowed us to create the above example. However, as soon as we put an additional constraint on the continuous $t$-norm, such pathological examples are no longer possible, as the following proposition shows.
\begin{proposition}\label{example_proposition}
Let $(X,M,*)$ be a fuzzy metric space such that $a*b \neq 0$ whenever $a\neq 0$ and $b \neq 0$. Then the union of two bounded sets if bounded.
\end{proposition}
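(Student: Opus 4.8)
The plan is to argue directly from the definitions. Let $A$ and $B$ be bounded subsets of $X$. If either is empty the statement is trivial, so assume both are nonempty and fix points $a_0 \in A$, $b_0 \in B$. By boundedness there exist $r_A, r_B \in (0,1)$ and $t_A, t_B > 0$ with $M(x,y,t_A) > 1 - r_A$ for all $x,y \in A$ and $M(x,y,t_B) > 1 - r_B$ for all $x,y \in B$. Since $M(x,y,\cdot)$ is non-decreasing, I may replace $t_A$ and $t_B$ by their maximum, so without loss of generality $t_A = t_B =: s$, and similarly I may take a common $r := \max\{r_A, r_B\}$, giving $M(x,y,s) > 1-r$ for all $x,y$ lying in $A$ or both in $B$.

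The remaining task is to control $M(x,y,\cdot)$ when $x \in A$ and $y \in B$ (or vice versa). Here I use the fuzzy triangle inequality through the bridge point $a_0$ (or $b_0$): for $x \in A$, $y \in B$,
\[
M(x,y, 3s) \geq M(x, a_0, s) * M(a_0, b_0, s) * M(b_0, y, s).
\]
Each of the three factors on the right is positive: $M(x,a_0,s) > 1-r > 0$ since $x, a_0 \in A$, $M(b_0,y,s) > 1-r > 0$ since $b_0, y \in B$, and $M(a_0,b_0,s) > 0$ by Axiom (1) of a fuzzy metric space. By the hypothesis on the $t$-norm, the product of nonzero numbers is nonzero, so $c := M(x,a_0,s) * M(a_0,b_0,s) * M(b_0,y,s) > 0$. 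However, this alone is not enough: I need a \emph{uniform} positive lower bound independent of $x$ and $y$.

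To get uniformity, note that $M(x,a_0,s) \geq 1-r$ and $M(b_0,y,s) \geq 1-r$ uniformly, and $M(a_0,b_0,s)$ is a single fixed positive number, call it $\delta_0$. By monotonicity of $*$ in each variable (Axiom (4) of a continuous $t$-norm),
\[
c \;\geq\; (1-r) * \delta_0 * (1-r) \;=:\; \epsilon,
\]
and $\epsilon > 0$ again by the hypothesis on $*$, since $1-r \neq 0$ and $\delta_0 \neq 0$. Now set $t_0 := 3s$ and $r_0 := 1 - \epsilon \in (0,1)$. Then for all $x,y \in A \cup B$ we have $M(x,y,t_0) > 1 - r_0$: if both points are in $A$ or both in $B$ this follows from monotonicity in $t$ (since $t_0 \geq s$) and $1-r_0 = \epsilon \leq 1-r$ — here I should double-check $\epsilon \leq 1-r$, which holds because $(1-r)*\delta_0*(1-r) \leq (1-r)*1*(1-r) \leq 1-r$ using Axioms (3) and (4); and if the points are in different sets it follows from the displayed bound above. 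Hence $A \cup B$ is bounded. The only subtle point — and the place where the $t$-norm hypothesis is genuinely needed — is converting the pointwise positivity of the product into a uniform lower bound, which is why I isolate the fixed quantity $\delta_0$ and push all the $x,y$-dependence into the uniformly bounded factors before applying monotonicity.
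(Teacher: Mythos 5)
Your proof is correct and follows essentially the same route as the paper's: both fix bridge points $a_0\in A$, $b_0\in B$, apply the fuzzy triangle inequality across $A$--$B$ pairs, and use the hypothesis on $*$ together with its monotonicity to extract a uniform positive lower bound, which is then reconciled with the within-$A$ and within-$B$ bounds. Your version is slightly more careful in spelling out the normalization of parameters and the verification that $\epsilon \leq 1-r$, but the argument is the same.
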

\begin{proof}
Let $A,B \subset X$ be bounded. This means that there exist $r,r' \in (0,1)$ and $t,t' >0$ such that $M(x,y,t)>1-r$ for all $x,y \in A$, and $M(x,y,t')>1-r'$ for all $x,y \in B$. Choose $a\in A$ and $b \in B$. Let $x \in A$ and $y \in B$ be arbitrary. Then
\begin{equation*}
\begin{split}
M(x,y, 2t+t') & \geq M(x,a,t)*M(a,b,t)*M(b,y,t')\\
& \geq (1-r)*M(a,b,t)*(1-r').
\end{split}
\end{equation*}
Since by the assumed property of the continuous $t$-norm the right side of the above inequality is a positive number, there exists $s\in (0,1)$ such that
\[M(x,y, 2t+t')>1-s.\]
In fact, we can choose $s\in(0,1)$ large enough so that $1-s<1-r$ and $1-s<1-r'.$ This shows that $M(x,y, 2t+t')>1-s$ for any $x,y \in A \cup B$, proving that  $A\cup B$ is bounded.
\end{proof}

To be able to talk about asymptotic dimension, we must have an effective way of estimating the size and boundedness of sets. Thus, from now on we will always assume that the given continuous $t$-norms have the property that $a*b \neq 0$ whenever $a\neq 0$ and $b \neq 0$. Notice that $a*b=ab$ and $a*b=\min\{a,b\}$ both satisfy this property.

\section{Definition of asymptotic dimension of fuzzy metric spaces}\label{defofasdim}
In this section, we introduce the definition of asymptotic dimension of a fuzzy metric space.  To aid the reader not familiar with the large-scale notions in the metric setting, each definition in the fuzzy metric setting is preceded by the corresponding definition in the metric setting. The definitions in the metric setting come from \cite{asymptotic_dimension}.

The following lemma will  be helpful in translating metric notions to the fuzzy metric setting.
\begin{lemma}\label{useful_lemma}
Let $(X,d)$ be a metric space, and let $(X, M, *)$ be the corresponding standard fuzzy metric space. Then for any $x,y \in X$, $r\in (0,1)$ and $t>0,$
\[M(x,y,t)>1-r \Longleftrightarrow d(x,y)<\frac{rt}{1-r}.\]
\end{lemma}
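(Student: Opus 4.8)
The plan is to unwind the definition of the standard fuzzy metric and reduce the claimed equivalence to an elementary manipulation of inequalities. Recall that for the standard fuzzy metric space induced by $d$ we have $M(x,y,t)=\frac{t}{t+d(x,y)}$, so the condition $M(x,y,t)>1-r$ reads $\frac{t}{t+d(x,y)}>1-r$. Everything reduces to showing this last inequality is equivalent to $d(x,y)<\frac{rt}{1-r}$.

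The first step is to clear the denominator. Since $d(x,y)\geq 0$ and $t>0$, the quantity $t+d(x,y)$ is strictly positive, so multiplying both sides of $\frac{t}{t+d(x,y)}>1-r$ by $t+d(x,y)$ preserves the inequality and yields $t>(1-r)\bigl(t+d(x,y)\bigr)$. Expanding the right-hand side gives $t>(1-r)t+(1-r)d(x,y)$, hence $t-(1-r)t>(1-r)d(x,y)$, i.e. $rt>(1-r)d(x,y)$.

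The second step is to divide by $1-r$. Here the hypothesis $r\in(0,1)$ is what makes the argument work: it guarantees $1-r>0$, so dividing both sides of $rt>(1-r)d(x,y)$ by $1-r$ again preserves the inequality and gives $\frac{rt}{1-r}>d(x,y)$, which is exactly the right-hand side of the asserted equivalence. Because each step above is a multiplication or division by a strictly positive quantity, every implication is reversible; reading the chain backwards establishes the converse direction, so no separate argument is needed.

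There is essentially no obstacle here. The only point requiring any care is bookkeeping on the sign conditions — namely that $t+d(x,y)>0$ and $1-r>0$ — so that each manipulation is a genuine equivalence rather than a one-way implication; once those are noted, the lemma follows immediately.
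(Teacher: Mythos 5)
Your proof is correct and is exactly the routine computation the paper has in mind (the paper simply states that the proof is straightforward from the definition of the standard fuzzy metric). The chain of equivalences, with the sign conditions $t+d(x,y)>0$ and $1-r>0$ justifying reversibility, is all that is needed.
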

\begin{proof}
The proof is straightforward and follows from the definition of the standard fuzzy metric space.
\end{proof}

\begin{definition}
A family $\mathcal{U}$ of subsets of a metric space $(X,d)$ is \textbf{uniformly bounded} (in $(X,d)$) if there exists $t>0$ such that $d(x,y)<t$ for all $x,y \in U$ for any $U \in \mathcal{U}$.
\end{definition}

\begin{definition}\label{uniformly_bounded}
A family $\mathcal{U}$ of subsets of a fuzzy metric space $(X,M, *)$ is \textbf{uniformly bounded} (in $(X,M,*)$) if there exist $r\in (0,1)$ and $t>0$ such that $M(x,y,t)>1-r$ for all $x,y \in U$ for any $U \in \mathcal{U}$.
\end{definition}

\begin{proposition} \label{first_prop}
Let $(X,d)$ be a metric space, and let $(X, M, *)$ be the corresponding standard fuzzy metric space. Let $\mathcal{U}$ be a family of subsets of $X$. Then $\mathcal{U}$ is uniformly bounded in $(X,d)$ if and only if it is uniformly bounded in $(X, M, *)$.
\end{proposition}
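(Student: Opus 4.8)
The plan is to reduce the statement to a direct translation via Lemma \ref{useful_lemma}, handling each implication separately. First I would unwind the definitions: $\mathcal{U}$ is uniformly bounded in $(X,d)$ means there is a single $t>0$ with $d(x,y)<t$ for all $x,y$ lying in a common member of $\mathcal{U}$, and $\mathcal{U}$ is uniformly bounded in $(X,M,*)$ means there are $r\in(0,1)$ and $s>0$ with $M(x,y,s)>1-r$ for all such pairs. Lemma \ref{useful_lemma} tells us that for the standard fuzzy metric, $M(x,y,s)>1-r$ is equivalent to $d(x,y)<\frac{rs}{1-r}$, so both conditions are really just a uniform upper bound on $d$; the whole proof amounts to showing each uniform bound can be converted into the other.

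For the forward direction, suppose $d(x,y)<t$ for all $x,y$ in a common $U\in\mathcal{U}$. I would then pick any convenient pair $(r,s)$ with $\frac{rs}{1-r}\geq t$ — for instance fix $s=1$ and choose $r$ close enough to $1$ that $\frac{r}{1-r}\geq t$ (which is possible since $\frac{r}{1-r}\to\infty$ as $r\to 1^-$) — and then Lemma \ref{useful_lemma} gives $M(x,y,1)>1-r$ for all such pairs, so $\mathcal{U}$ is uniformly bounded in $(X,M,*)$. For the reverse direction, suppose there are $r\in(0,1)$, $s>0$ with $M(x,y,s)>1-r$ for all $x,y$ in a common $U\in\mathcal{U}$. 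By Lemma \ref{useful_lemma} this is exactly $d(x,y)<\frac{rs}{1-r}$, so taking $t:=\frac{rs}{1-r}$ witnesses uniform boundedness of $\mathcal{U}$ in $(X,d)$.

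Since both directions are immediate applications of Lemma \ref{useful_lemma} together with the elementary observation that $r\mapsto \frac{r}{1-r}$ is unbounded on $(0,1)$, there is no real obstacle here; the only thing to be careful about is that uniform boundedness requires the \emph{same} $t$ (resp. $(r,s)$) to work across all members of $\mathcal{U}$ simultaneously, but this is automatic because the conversion in Lemma \ref{useful_lemma} is uniform in $x,y$ and the chosen constants depend only on $t$ (resp. $r,s$), not on the particular pair or the particular member of $\mathcal{U}$. I would therefore write the proof as two short paragraphs mirroring the two implications above.
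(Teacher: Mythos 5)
Your proof is correct and follows essentially the same route as the paper: both directions are immediate applications of Lemma \ref{useful_lemma}. The only cosmetic difference is in the forward direction, where the paper fixes $r=\tfrac{1}{2}$ (so that $\frac{rt}{1-r}=t$) and reuses the same $t$, whereas you fix $s=1$ and push $r$ toward $1$; both choices work.
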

\begin{proof}
Lemma \ref{useful_lemma} shows the backward direction of the proposition.
Letting $r=\frac{1}{2}$, the equivalence in Lemma \ref{useful_lemma} turns into 
\[M(x,y,t)>\frac{1}{2} \Longleftrightarrow d(x,y)<t.\]
This proves the forward direction of the proposition.
\end{proof}

\begin{definition}
Two subsets $U$ and $U'$ of a metric space $(X,d)$ are called \textbf{$t$-disjoint} for some $t>0$ if $d(U,U')>t$, where
\[d(U,U')=\inf\{d(x,y) \mid x\in U, y \in U'\}.\]
A family $\mathcal{U}$ of subsets of $(X,d)$ is called \textbf{$t$-disjoint} if any two distinct elements of $\mathcal{U}$ are $t$-disjoint.
\end{definition}

\begin{definition}\label{rtdisjoint}
Two subsets $U$ and $U'$ of a fuzzy metric space $(X,M,*)$ are called \textbf{$(r,t)$-disjoint} for some $r\in(0,1)$ and $t>0$ if $M(U,U',t)<1-r$, where
\[M(U,U',t)=\sup\{M(x,y,t) \mid x\in U, y \in U'\}.\]
A family $\mathcal{U}$ of subsets of $(X,M,*)$ is called \textbf{$(r,t)$-disjoint} if any two distinct elements of $\mathcal{U}$ are $(r,t)$-disjoint.
\end{definition}
It is easy to see (by using Lemma \ref{useful_lemma}) that when $(X,d)$ is a metric space, $(X, M,*)$ is the corresponding standard fuzzy metric space, and $U,U' \subseteq X,$ then $U$ and $U'$ are $t$-disjoint for some $t>0$ if and only if they are $(r',t')$-disjoint for some $r' \in (0,1)$ and some $t'>0$. 

\begin{definition}
Let $(X,d)$ be a metric space. We say that \textbf{asymptotic dimension} of $X$ does not exceed $n$ and write $\operatorname{asdim}X \leq n$ provided for every $r>0$, there exist $r$-disjoint families $\mathcal{U}^0, \dots, \mathcal{U}^n$ of subsets of $X$ such that $\bigcup_i \mathcal{U}^i$ is a uniformly bounded cover of $(X,d).$ If no such $n$ exists, we say that $\operatorname{asdim} X=\infty$. If $\operatorname{asdim} X \leq n$ but $asdim X\ \not\leq n-1,$ then we say that $\operatorname{asdim}X=n$.
\end{definition}

\begin{definition}
Let $(X,M,*)$ be a fuzzy metric space. We say that \textbf{asymptotic dimension} of $X$ does not exceed $n$ and write $\operatorname{asdim} X \leq n$ provided for every $r\in (0,1)$ and $t>0$, there exist $(r,t)$-disjoint families $\mathcal{U}^0, \dots, \mathcal{U}^n$ of subsets of $X$ such that $\bigcup_i \mathcal{U}^i$ is a uniformly bounded cover of $(X,M,*).$ If no such $n$ exists, we say that $\operatorname{asdim} X=\infty$. If $\operatorname{asdim} X \leq n$ but $\operatorname{asdim} X\ \not\leq n-1,$ then we say that $\operatorname{asdim}X=n$.
\end{definition}

When we need to distinguish between  asymptotic dimension of a metric space and asymptotic dimension of a fuzzy metric space, we will write $\operatorname{asdim}(X,d)$ and $\operatorname{asdim}(X,M,*)$, respectively.

\begin{proposition}\label{standardandfuzzy}
Let $(X,d)$ be a metric space, and let $(X, M, *)$ be the corresponding standard fuzzy metric space. Then 
\[\operatorname{asdim}(X,d)=\operatorname{asdim}(X,M,*).\]
\end{proposition}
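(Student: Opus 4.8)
The plan is to prove the equality of the two dimensions by establishing two inequalities, each of which amounts to translating a witnessing family of covers from one setting to the other. The main tools are Lemma \ref{useful_lemma}, Proposition \ref{first_prop}, and the observation (stated after Definition \ref{rtdisjoint}) that $(r,t)$-disjointness and $t$-disjointness are interchangeable for standard fuzzy metric spaces. Note that a cover of the underlying set $X$ is the same object regardless of which structure we put on $X$; what changes is the meaning of ``uniformly bounded'' and ``disjoint,'' and both of those notions are shown above to be compatible.

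First I would prove $\operatorname{asdim}(X,M,*) \leq \operatorname{asdim}(X,d)$. Suppose $\operatorname{asdim}(X,d) \leq n$, and fix $r \in (0,1)$ and $t > 0$; I must produce $(r,t)$-disjoint families $\mathcal{U}^0, \dots, \mathcal{U}^n$ whose union is a uniformly bounded cover in $(X,M,*)$. The idea is to choose the metric scale appropriately: by Lemma \ref{useful_lemma}, two points with $M(x,y,t) \geq 1-r$ satisfy $d(x,y) \leq \frac{rt}{1-r}$, so if two subsets $U,U'$ are $s$-disjoint in $(X,d)$ for $s = \frac{rt}{1-r}$, then $d(x,y) > \frac{rt}{1-r}$ for all $x \in U$, $y \in U'$, hence $M(x,y,t) < 1-r$ (using the equivalence in Lemma \ref{useful_lemma} again), so $M(U,U',t) \leq 1-r$; a minor adjustment of $r$ upward makes this a strict inequality $<1-r'$ as required. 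Applying the metric hypothesis with this $s$ yields $s$-disjoint families forming a uniformly bounded metric cover; by the remark on disjointness these families are $(r',t)$-disjoint, and by Proposition \ref{first_prop} their union is uniformly bounded in $(X,M,*)$. Hence $\operatorname{asdim}(X,M,*) \leq n$.

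Next I would prove the reverse inequality $\operatorname{asdim}(X,d) \leq \operatorname{asdim}(X,M,*)$ by an entirely symmetric argument. Suppose $\operatorname{asdim}(X,M,*) \leq n$ and fix $s > 0$; I want $s$-disjoint families whose union is a uniformly bounded metric cover. Using $r = \tfrac{1}{2}$, Lemma \ref{useful_lemma} gives the clean equivalence $M(x,y,t) > \tfrac12 \Leftrightarrow d(x,y) < t$, so I apply the fuzzy hypothesis with $r = \tfrac12$ and $t = s$ (or a slightly larger $t$ to get strictness) to obtain $(\tfrac12, s)$-disjoint families forming a uniformly bounded fuzzy cover; the remark on disjointness converts these to $s'$-disjoint metric families for a suitable $s' \geq s$, and Proposition \ref{first_prop} again handles the uniform boundedness. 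Thus $\operatorname{asdim}(X,d) \leq n$, and combining the two inequalities gives the claimed equality.

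The only real subtlety — and the step I would be most careful about — is the bookkeeping with strict versus non-strict inequalities: the definitions of $(r,t)$-disjointness and of $s$-disjointness both use strict inequalities, while Lemma \ref{useful_lemma} is an equivalence of strict inequalities whose negation yields non-strict ones. This is handled exactly as in the proof of Proposition \ref{example_proposition} and in the commented-out proposition following Definition \ref{rtdisjoint}: one passes to a slightly smaller radius (replacing $r$ by some $r' > r$ with $1 - r' < 1-r$, equivalently shrinking the ``$1-r$'' threshold) so that a non-strict bound becomes a strict bound at the new parameter. No genuinely new idea is needed beyond the translation lemmas already established; the proof is essentially a dictionary argument.
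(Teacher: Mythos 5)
Your proposal is correct and follows essentially the same route as the paper: both directions are handled by translating disjointness through Lemma \ref{useful_lemma} at a suitably chosen scale (bumping the parameter slightly to turn the non-strict bound on the supremum/infimum into a strict one) and invoking Proposition \ref{first_prop} for uniform boundedness. If anything, your explicit attention to the strict-versus-non-strict bookkeeping is more careful than the paper's own write-up, whose backward direction contains a harmless parameter slip (it asserts $\tfrac{1}{3}t>t$) that your choice of scale avoids.
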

\begin{proof}
Assume that $\operatorname{asdim}(X,d)\leq n$. Let $r\in (0,1)$ and $t>0$ be arbitrary. Let $s\in (0,1)$ be such that $s>r$. Since $\operatorname{asdim}(X,d)\leq n$, there exist $\frac{st}{1-s}$-disjoint families $\mathcal{U}^0, \dots, \mathcal{U}^n$ of subsets of $X$ such that $\bigcup_i \mathcal{U}^i$ is a uniformly bounded cover of $(X,d)$. By Proposition \ref{first_prop}, these families form a uniformly bounded cover of $(X,M,*)$. By Lemma \ref{useful_lemma}, we have 
\[M(x,y,t)<1-s \Longleftrightarrow d(x,y)>\frac{st}{1-s}\]
for any $x,y \in X$. This shows that 
\[M(x,y,t)<1-s<1-r\]
for any $x \in U \in  \mathcal{U}^i$ and $y \in U' \in  \mathcal{U}^i$ such that $U \neq U'$, where $i$ is any integer such that $0 \leq i \leq n$. Thus, 
\[M(U,U',t)<1-r\]
for any $U, U' \in  \mathcal{U}_i$ such that $U \neq U'$, where $i$ is any integer such that $0 \leq i \leq n$. This proves that the families $\mathcal{U}^0, \dots, \mathcal{U}^n$ are $(r,t)$-disjoint, showing that $\operatorname{asdim}(X,M,*)\leq n.$

Conversely, assume that $\operatorname{asdim}(X,M,*)\leq n.$ Let $t>0$ be arbitrary. Since $\operatorname{asdim}(X,M,*)\leq n,$ there exist $(\frac{1}{4},t)$-disjoint families $\mathcal{U}^0, \dots, \mathcal{U}^n$ of subsets of $X$ such that $\bigcup_i \mathcal{U}^i$ is a uniformly bounded cover of $(X,M,*)$. By Proposition \ref{first_prop}, these families form a uniformly bounded cover of $(X,d)$. By Lemma \ref{useful_lemma} (with $r=\frac{1}{4}$), we have 
\[M(x,y,t)<1-\frac{1}{4} \Longleftrightarrow d(x,y)>\frac{\frac{1}{4}t}{1-\frac{1}{4}}=\frac{1}{3}t\]
for any $x,y \in X$. This shows that 
\[d(x,y)>\frac{1}{3}t>t\]
for any $x \in U \in  \mathcal{U}^i$ and $y \in U' \in  \mathcal{U}^i$ such that $U \neq U'$, where $i$ is any integer such that $0 \leq i \leq n$. Thus, 
\[d(U,U')>t\]
for any $U, U' \in  \mathcal{U}^i$ such that $U \neq U'$, where $i$ is any integer such that $0 \leq i \leq n$. This proves that the families $\mathcal{U}^0, \dots, \mathcal{U}^n$ are $t$-disjoint, showing that $\operatorname{asdim}(X,d)\leq n.$

The above proof also implies that if one of the asymptotic dimensions is infinite, then so is the other.
\end{proof}

\begin{proposition}\label{subspace}
Let $(X,M,*)$ be a fuzzy metric space. Let $Y \subseteq X$. Let $(Y, M_Y, *)$ be a subspace fuzzy metric space. Then $\operatorname{asdim}Y \leq \operatorname{asdim}X$. 
\end{proposition}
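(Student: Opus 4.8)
The plan is to show that any witnessing families for $\operatorname{asdim} X \leq n$ restrict to witnessing families for $\operatorname{asdim} Y \leq n$. This is the fuzzy analogue of the standard fact that asymptotic dimension is monotone under subspaces in the metric setting, and the proof follows the same template: intersect everything with $Y$.

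First I would dispose of the trivial case: if $\operatorname{asdim} X = \infty$ there is nothing to prove, so assume $\operatorname{asdim} X \leq n$ for some $n \in \mathbb{N} \cup \{0\}$. Fix arbitrary $r \in (0,1)$ and $t > 0$. By the definition of asymptotic dimension of $X$, there exist $(r,t)$-disjoint families $\mathcal{U}^0, \dots, \mathcal{U}^n$ of subsets of $X$ such that $\bigcup_i \mathcal{U}^i$ is a uniformly bounded cover of $(X,M,*)$. For each $i$, define $\mathcal{V}^i = \{ U \cap Y \mid U \in \mathcal{U}^i \}$ (discarding empty intersections if one wishes, though it is not necessary). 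Then $\bigcup_i \mathcal{V}^i$ is a cover of $Y$ since $\bigcup_i \mathcal{U}^i$ covers $X \supseteq Y$.

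Next I would check the two required properties, both of which are immediate from the fact that passing to a subset only shrinks sets. Uniform boundedness: there exist $r' \in (0,1)$, $t' > 0$ with $M(x,y,t') > 1-r'$ for all $x,y$ lying in a common member of $\bigcup_i \mathcal{U}^i$; since every member of $\bigcup_i \mathcal{V}^i$ is contained in such a member, the same $r', t'$ witness uniform boundedness of $\bigcup_i \mathcal{V}^i$ in $(Y, M_Y, *)$ (recalling $M_Y$ is just the restriction of $M$). The $(r,t)$-disjointness: if $V = U \cap Y$ and $V' = U' \cap Y$ are distinct members of $\mathcal{V}^i$, then $U, U'$ are distinct members of $\mathcal{U}^i$, so $M(U,U',t) < 1-r$; since $V \subseteq U$ and $V' \subseteq U'$, we have $M_Y(V,V',t) = \sup\{M(x,y,t) \mid x \in V, y \in V'\} \leq M(U,U',t) < 1-r$, so $\mathcal{V}^i$ is $(r,t)$-disjoint. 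Hence $\mathcal{V}^0, \dots, \mathcal{V}^n$ witness $\operatorname{asdim} Y \leq n$, and since $r, t$ were arbitrary, $\operatorname{asdim} Y \leq n = \operatorname{asdim} X$.

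There is no real obstacle here; the only point requiring a moment's care is making sure the monotonicity of the supremum defining $M(U,U',t)$ goes in the correct direction (fewer points can only decrease a supremum), which is exactly what is needed since $(r,t)$-disjointness is an upper bound condition on that supremum. One could also note the edge case where some $V^i$ is empty or all intersections are empty, but an empty family is vacuously $(r,t)$-disjoint and contributes nothing to the cover, so it causes no difficulty.
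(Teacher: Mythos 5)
Your proof is correct and is exactly the argument the paper has in mind: the paper omits the details, stating only that the proof "closely imitates the proof in the metric setting," and that metric proof is precisely the restrict-to-$Y$ argument you give, with the key observation that $(r,t)$-disjointness is an upper-bound condition on a supremum and hence is inherited by subsets. No issues.
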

\begin{proof}
The proof is straightforward and closely imitates the proof in the metric setting (see Proposition 23 in \cite{asymptotic_dimension}).
\end{proof}

For examples of explicit calculations of asymptotic dimension of certain fuzzy metric spaces, the reader is referred to Section \ref{examples}.

\section{Properties of asymptotic dimension of fuzzy metric spaces}\label{consequences}

To state the properties of asymptotic dimension similar to Theorem 19 in \cite{asymptotic_dimension}, we need to translate the notions of $t$-multiplicity, $t$-neighborhood, and Lebesgue number to the fuzzy setting. For the definitions in the metric setting, the reader is referred to \cite{asymptotic_dimension}. 

\begin{definition}\label{multiplicity}
Let $r \in (0,1)$ and $t>0$. The \textbf{$(r,t)$-multiplicity} of a cover $\mathcal{U}$ in a fuzzy metric space $(X,M,*)$ is the largest integer $n$ so that there exists $x\in X$ so that $B(x,r,t)$ meets $n$ of the sets in $\mathcal{U}$.
\end{definition}
\begin{definition}\label{nbhddef}
Let $r\in (0,1)$ and $t>0$. The \textbf{$(r,t)$-neighborhood} of a subset $U$ of a fuzzy metric space $(X,M,*)$ is defined by
\[N_{r,t}(U)=\{x \in X \mid \exists y \in U \text{ such that } M(x,y,t)>1-r\}.\]
\end{definition}

\begin{definition}\label{lpair}
A \textbf{Lebesgue pair} of a cover $\mathcal{U}$ of a fuzzy metric space $(X,M,*)$ is a pair of numbers $r\in (0,1)$ and $t>0$ so that for any $x \in X$, $B(x,r,t) \subseteq U$ for some $U \in \mathcal{U}.$
\end{definition}

One of the first mentions of the Lebesgue property for fuzzy metric spaces can be found in \cite{uniform}. By using Lemma \ref{useful_lemma}, one can show the equivalence of $(r,t)$-multiplicity, $(r,t)$-neighborhood, and Lebesgue pair to the corresponding definitions in the metric setting. The equivalence is similar to the equivalence of $t$-disjointness and $(r,t)$-disjointness mentioned after Definition \ref{rtdisjoint}.

We will use the following proposition about $(r,t)$-neighborhoods in Theorem \ref{bigtheorem}.

\begin{proposition}\label{nbhd}
If $\mathcal{U}$ is a family of uniformly bounded subsets of a fuzzy metric space $(X,M,*)$, then $\{N_{r,t}(U) \mid U \in \mathcal{U}\}$ is also a uniformly bounded family of subsets of $X$ for any $r\in (0,1)$, $t>0$. If $\mathcal{U}$ is a cover of $X$, then so is $\{N_{r,t}(U) \mid U \in \mathcal{U}\}$.
\end{proposition}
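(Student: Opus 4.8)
The plan is to reduce both assertions to a single application of the fuzzy triangle inequality (axiom~(4) of Definition~\ref{fuzzydef}), combined with the standing assumption that $a*b\neq 0$ whenever $a,b\neq 0$ and with monotonicity of $*$.

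First I would unpack the hypothesis. Since $\mathcal{U}$ is uniformly bounded, fix $r_0\in(0,1)$ and $t_0>0$ with $M(u,u',t_0)>1-r_0$ for all $u,u'\in U$ and all $U\in\mathcal{U}$. Now fix the prescribed $r\in(0,1)$ and $t>0$, pick any $U\in\mathcal{U}$, and take arbitrary $x,y\in N_{r,t}(U)$. By Definition~\ref{nbhddef} there are $x',y'\in U$ with $M(x,x',t)>1-r$ and $M(y,y',t)>1-r$, and $M(x',y',t_0)>1-r_0$ by the choice of $r_0,t_0$. Applying axiom~(4) twice — to peel off $x'$ over time $t$ and $y'$ over time $t$ while joining $x'$ to $y'$ over time $t_0$ — together with symmetry (axiom~(3)) and monotonicity of $*$ (condition~(4) of the $t$-norm), yields
\[
M(x,y,2t+t_0)\ \geq\ M(x,x',t)*M(x',y',t_0)*M(y',y,t)\ \geq\ (1-r)*(1-r_0)*(1-r).
\]

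The only place the standing assumption enters is the final step: the right-hand side $c_0:=(1-r)*(1-r_0)*(1-r)$ is a $*$-product of three nonzero elements of $[0,1]$, hence $c_0>0$, so I may choose $s\in(0,1)$ with $1-s<c_0$ (for instance $1-s=c_0/2$). The point to be careful about — and essentially the only subtlety — is bookkeeping: $c_0$, $s$, and the time $2t+t_0$ depend only on $r$, $t$, $r_0$, $t_0$ and $*$, not on the chosen $U$ nor on $x,y$, so $M(x,y,2t+t_0)>1-s$ holds uniformly over all members of $\{N_{r,t}(U)\mid U\in\mathcal{U}\}$; this is precisely uniform boundedness. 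For the last sentence, observe that $M(x,x,t)=1>1-r$ forces $x\in N_{r,t}(U)$ for every $x\in U$, so $U\subseteq N_{r,t}(U)$; hence if $\bigcup\mathcal{U}=X$ then $\bigcup_{U\in\mathcal{U}}N_{r,t}(U)=X$ as well. I do not anticipate a genuine obstacle here — the argument is the fuzzy analogue of the elementary metric fact that $N_t(U)$ has diameter at most $2t+\operatorname{diam}(U)$ — provided one keeps the constants visibly independent of $U$.
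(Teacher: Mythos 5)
Your argument is correct and is essentially the same as the paper's (the paper relegates it to "straightforward," but the intended proof is exactly this two-fold application of the fuzzy triangle inequality giving $M(x,y,2t+t_0)\geq (1-r)*(1-r_0)*(1-r)>0$, plus the observation that $U\subseteq N_{r,t}(U)$ for the cover claim). Your bookkeeping that the constants are independent of $U$ and of $x,y$ is exactly the point that makes the bound uniform.
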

\begin{proof}
The proof is straightforward.
\end{proof}

Recall that a cover $\mathcal{V}$ \textbf{refines} a cover $\mathcal{U}$ if every element of $\mathcal{V}$ is in some element of $\mathcal{U}$. The \textbf{multiplicity} of a cover is the largest number of elements of the cover meeting any point of the space.

\begin{theorem}\label{bigtheorem}
Let $(X,M,*)$ be a fuzzy metric space. Then $\operatorname{asdim} X \leq n$ implies all of the following conditions:
\begin{enumerate}
\item for every $r\in (0,1)$ and $t>0$, there exists a uniformly bounded cover $\mathcal{U}$ of $X$ with $(r,t)$-multiplicity $\leq n+1$, \label{condition1}
\item for every $r\in (0,1)$ and $t>0$, there exists a uniformly bounded cover $\mathcal{V}$ of $X$ with a Lebesgue pair $r,t$ and multiplicity $\leq n+1,$ \label{condition2}
\item for every uniformly bounded cover $\mathcal{U}$ of $X$, there is a uniformly bounded cover $\mathcal{V}$ of $X$ of multiplicity $\leq n+1$ so that $\mathcal{U}$ refines $\mathcal{V}.$\label{condition3}
\end{enumerate}
\end{theorem}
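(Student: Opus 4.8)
The plan is to mimic the classical metric-space proof (essentially Theorem 19 of \cite{asymptotic_dimension}) but being careful with the two-parameter bookkeeping $(r,t)$ that replaces the single metric parameter, and with the fact that the $t$-norm only guarantees positivity of products, not any quantitative estimate.

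\textbf{Proof of (\ref{condition1}).} Given $r \in (0,1)$ and $t>0$, I would first pick a slightly ``stronger'' pair $r' \in (0,1)$, $t'>0$ — concretely something like $r' = r$ and $t' = 3t$, chosen so that a chain of three jumps of size $t$ still stays inside the $(r',t')$-scale after applying the triangle inequality — and apply the definition of $\operatorname{asdim} X \leq n$ at the pair $(r', t')$ (or rather at a pair chosen so that the resulting disjointness beats being hit simultaneously by a single ball $B(x,r,t)$). This yields $(r'',t'')$-disjoint families $\mathcal U^0,\dots,\mathcal U^n$ whose union is a uniformly bounded cover $\mathcal U$. The claim is then that $\mathcal U$ has $(r,t)$-multiplicity $\leq n+1$: if some $B(x,r,t)$ met two sets $U,U'$ from the same family $\mathcal U^i$, we could find $y \in U \cap B(x,r,t)$ and $y' \in U' \cap B(x,r,t)$, and then $M(y,y',2t) \geq M(y,x,t)*M(x,y',t) \geq (1-r)*(1-r)$, which — after choosing the disjointness parameters of the $\mathcal U^i$ appropriately at the start — contradicts $(r'',t'')$-disjointness of $\mathcal U^i$. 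Since $B(x,r,t)$ can meet at most one set from each of the $n+1$ families, the $(r,t)$-multiplicity is $\leq n+1$.

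\textbf{Proof of (\ref{condition2}).} Starting from the cover $\mathcal U$ produced in (\ref{condition1}) (applied at a scale slightly larger than the target $(r,t)$), I would pass to the cover $\mathcal V = \{\, N_{r,t}(U) \mid U \in \mathcal U \,\}$. By Proposition \ref{nbhd}, $\mathcal V$ is again a uniformly bounded cover. It has Lebesgue pair $(r,t)$ essentially by construction: given $x \in X$, $x$ lies in some $U \in \mathcal U$, and then $B(x,r,t) \subseteq N_{r,t}(U) \in \mathcal V$ — here one uses the triangle inequality, $M(x,z,?) \geq M(x,y,t)*M(y,z,t)$ wait, more precisely if $z \in B(x,r,t)$ and $x \in U$ then $z \in N_{r,t}(U)$ directly from $M(z,x,t) > 1-r$. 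Finally, the multiplicity of $\mathcal V$ is $\leq n+1$ because if a point lies in $N_{r,t}(U_1),\dots,N_{r,t}(U_k)$ then each $U_j$ meets $B(x,r,t)$ (or a ball of the same scale), so $k \leq n+1$ by the $(r,t)$-multiplicity bound from (\ref{condition1}) — again modulo choosing the scale in (\ref{condition1}) large enough to absorb the extra jump.

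\textbf{Proof of (\ref{condition3}).} Given an arbitrary uniformly bounded cover $\mathcal U$, fix $r' \in (0,1)$ and $t'>0$ witnessing its uniform boundedness, so $M(x,y,t') > 1-r'$ for all $x,y$ in a common member. Apply (\ref{condition2}) at the pair $(r', t')$ to get a uniformly bounded cover $\mathcal V$ of multiplicity $\leq n+1$ with Lebesgue pair $(r',t')$. Then every $U \in \mathcal U$ satisfies $U \subseteq B(x,r',t')$ for any $x \in U$ (by uniform boundedness), and $B(x,r',t') \subseteq V$ for some $V \in \mathcal V$ (by the Lebesgue pair property), so $\mathcal U$ refines $\mathcal V$, as required.

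\textbf{Main obstacle.} The one genuinely delicate point, as opposed to routine translation, is that unlike in the metric setting there is no arithmetic relation between the $t$-parameter and the $r$-parameter after applying the triangle inequality: from $M(y,x,t)*M(x,y',t) \geq (1-r)*(1-r)$ one gets \emph{some} positive lower bound $1 - s$ with $s = s(r)$, but no explicit formula for $s$. So the bookkeeping cannot be done by solving inequalities as in the metric case; instead, at the very start of each part one must choose the disjointness parameters of the families $\mathcal U^i$ first (say demand them to be $(s,3t)$-disjoint, where $1-s$ is the product $(1-r)*(1-r)$), and only then run the argument. Making these parameter choices in the right order — committing to the ``output'' pair, computing the required ``input'' pair via the positivity axiom of $*$, and then invoking $\operatorname{asdim} X \leq n$ at that input pair — is the crux; everything else is the standard nerve-free argument for the equivalent formulations of asymptotic dimension.
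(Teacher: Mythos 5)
Your proposal is correct and follows essentially the same route as the paper: the chain $(\operatorname{asdim}X\leq n)\Rightarrow(1)\Rightarrow(2)\Rightarrow(3)$, with the same trick of first fixing the output pair $(r,t)$, setting $1-r'$ at or below $(1-r)*(1-r)$ with a doubled (or tripled) $t$-parameter, and only then invoking the hypothesis at that input pair. Your step $(1)\Rightarrow(2)$ is in fact marginally cleaner than the paper's, since taking $N_{r,t}$-neighborhoods of a cover of $(r,t)$-multiplicity $\leq n+1$ already yields the Lebesgue pair $r,t$ directly from $x\in U$, with no further parameter enlargement needed.
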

\begin{proof}
We will show $(\operatorname{asdim}X\leq n) \implies (1) \implies (2) \implies (3)$.

$(\operatorname{asdim} \leq n) \implies (1)$. Let $r \in (0,1)$ and $t>0$ be arbitrary. Let $t'=2t$ and choose $r'\in (0,1)$ so that $1-r'<(1-r)*(1-r)$. By assumption, there exist uniformly bounded $(r',t')$-disjoint families $\mathcal{U}^0,\dots, \mathcal{U}^n$ of subsets of $X$ such that $\bigcup_i \mathcal{U}^i$ is a cover of $X.$ Let $\mathcal{U}=\bigcup_i \mathcal{U}^i$. Clearly $\mathcal{U}$ is uniformly bounded. Let $x \in X$ be arbitrary, and consider $B(x,r,t)$. If $B(x,r,t)$ intersects two elements $U$ and $U'$ of $\mathcal{V}$, then there exist $u \in U$ and $u' \in U'$ such that $M(x,u,t)>1-r$ and $M(x,u',t)>1-r$. Thus,
\begin{equation*}
\begin{split}
M(u,u',t') & = M(u,u',t+t)\\
& \geq M(u,x,t)* M(x,u',t) \\
& \geq (1-r)*(1-r) \\
& > 1-r'. \\
\end{split}
\end{equation*}
This implies that $U$ and $U'$ are not $(r',t')$-disjoint and must have come from different families $\mathcal{U}^i$ and $\mathcal{U}^j$ with $i\neq j$. Since there are $n+1$ distinct families, there can be at most $n+1$ elements of $\mathcal{V}=\bigcup_i \mathcal{U}^i$ that intersect $B(x,r,t)$. Consequently, $\mathcal{V}$ has $(r,t)$-multiplicity $\leq n+1.$

$(1) \implies (2)$. Let $r\in (0,1)$ and $t>0$ be arbitrary. Let $t'=2t$ and choose $r'\in (0,1)$ so that $1-r'<(1-r)*(1-r)$. By assumption, there exists a uniformly bounded cover $\mathcal{U}$ of $X$ with $(r',t')$-multiplicity $\leq n+1.$ Define $\overline{U}=N_{r',t'}(U)$, and let $\mathcal{V}=\{\overline{U} \mid U \in \mathcal{U}\}$. By Proposition \ref{nbhd}, $\mathcal{V}$ is uniformly bounded. Notice that $\mathcal{V}$ has a Lebesgue pair $r,t$ because for any $x \in X$, $B(x,r,t)$ intersects some $U \in \mathcal{U}$, which implies that $B(x,r,t) \subseteq \overline{U}$. To see it, let $y \in B(x,r,t)$. Since $B(x,r,t)$ intersects $U$, there exists $u\in U$ such that $M(x,u,t)>1-r$. Consequently,
\begin{equation*}
\begin{split}
M(u,y,t') & = M(u,y,t+t)\\
& \geq M(u,x,t)* M(x,y,t) \\
& \geq (1-r)*(1-r) \\
& > 1-r'. \\
\end{split}
\end{equation*}
Thus, $y$ belongs to the $(r',t')$-neighborhood of $U$, showing that $B(x,r,t) \subseteq \overline{U}$.
To see that the multiplicity of $\mathcal{V}$ is $\leq n+1,$ assume for contradiction that there exists $x\in X$ that intersects $n+2$ elements of $\mathcal{V}$. Call these elements $V_1, \dots , V_{n+2},$ and the corresponding elements of $\mathcal{U}$ by $U_1,\dots, U_{n+2}$. This means that there exist $u_i \in U_i$ for all $1 \leq i \leq n+2$ such that $M(x,u_i,t')>1-r'$. Thus, the ball $B(x, r',t')$ intersects every $U_i$, which contradicts the $(r',t')$-multiplicity of the cover $\mathcal{U}$.

$(2) \implies (3)$. Let $\mathcal{U}$ be a uniformly bounded cover of $X$. This means that there exist $r\in (0,1)$ and $t>0$ such that $M(x,y,t)>1-r$ for all $x,y \in U$ for any $U \in \mathcal{U}$. By assumption, there exists a uniformly bounded cover $\mathcal{V}$ of $X$ with a Lebesgue pair $r,t$ and multiplicity $\leq n+1$. It is left to show that $\mathcal{U}$ refines $\mathcal{V}$. Let  $U \in \mathcal{U}$. Let $u \in U$. Then $U \subseteq B(u,r,t).$ Since $\mathcal{V}$ is a cover with a Lebesgue pair $r,t$, there exists $V \in \mathcal{V}$ such that $U \subseteq B(u,r,t) \subseteq V,$ showing that $U$ refines $V$.
\end{proof}

The reader is encouraged to compare the above theorem with Theorem 19 in \cite{asymptotic_dimension}.

\begin{corollary}
For $n=0$, conditions (\ref{condition1}), (\ref{condition2}), and (\ref{condition3}) in Theorem \ref{bigtheorem} are equivalent to the statement that $\operatorname{asdim}X =0.$
\end{corollary}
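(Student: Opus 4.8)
The plan is to close the cycle of implications already half-established by Theorem~\ref{bigtheorem}. Taking $n=0$ there gives $(\operatorname{asdim}X\le 0)\implies(1)$, and the proof of that theorem verifies $(1)\implies(2)\implies(3)$ for every $n$, hence in particular for $n=0$. Since asymptotic dimension takes values in $\{0,1,2,\dots\}\cup\{\infty\}$, the inequality $\operatorname{asdim}X\le 0$ coincides with $\operatorname{asdim}X=0$. So the only thing left to prove is the implication $(3)\implies(\operatorname{asdim}X\le 0)$, and the proof of the corollary will consist of that single argument.

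For that implication, I would fix arbitrary $r\in(0,1)$ and $t>0$ and aim to manufacture an $(r,t)$-disjoint uniformly bounded cover of $X$, as demanded by the definition of $\operatorname{asdim}X\le 0$. The natural test cover to feed into condition $(3)$ is the ball cover $\mathcal{U}=\{B(x,\tilde r,t)\mid x\in X\}$, where $\tilde r$ is chosen in the open interval $(r,1)$; one checks quickly that $\mathcal{U}$ is a cover (each $x$ lies in its own ball, since $M(x,x,t)=1>1-\tilde r$) and that it is uniformly bounded, the latter via $M(y,y',2t)\ge M(y,x,t)*M(x,y',t)\ge(1-\tilde r)*(1-\tilde r)>0$ together with the standing assumption on $*$. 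Condition $(3)$ then supplies a uniformly bounded cover $\mathcal{V}$ of $X$ of multiplicity $\le 1$ that is refined by $\mathcal{U}$; since a multiplicity-$\le 1$ cover is a partition, the members of $\mathcal{V}$ are pairwise disjoint.

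The heart of the argument is then to show that $\mathcal{V}$ is $(r,t)$-disjoint. Given distinct $V,V'\in\mathcal{V}$ with $v\in V$ and $v'\in V'$, if $M(v,v',t)>1-\tilde r$ then $v'\in B(v,\tilde r,t)$, and since $\mathcal{U}$ refines $\mathcal{V}$ this ball sits inside a single member $V''$ of $\mathcal{V}$; but $v$ also lies in $B(v,\tilde r,t)$, so $v,v'\in V''$, and disjointness of $\mathcal{V}$ forces $V=V''=V'$, a contradiction. Hence $M(v,v',t)\le 1-\tilde r$ for all such $v,v'$, so $M(V,V',t)\le 1-\tilde r<1-r$, i.e.\ $\mathcal{V}$ is $(r,t)$-disjoint in the sense of Definition~\ref{rtdisjoint}; being also a uniformly bounded cover, it witnesses $\operatorname{asdim}X\le 0$, and since $r$ and $t$ were arbitrary we are done.

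I do not anticipate a real obstacle. The two places needing slight care are the choice of $\tilde r$ strictly inside $(r,1)$ — this is exactly what converts the pointwise bound $M(v,v',t)\le 1-\tilde r$ into the strict supremum bound $M(V,V',t)<1-r$ required in the definition of $(r,t)$-disjointness — and the uniform boundedness of the ball cover $\mathcal{U}$, which relies on the standing hypothesis that $a*b\ne 0$ whenever $a,b\ne 0$ (the point of Proposition~\ref{example_proposition}).
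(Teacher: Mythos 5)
Your proposal is correct and follows essentially the same route as the paper: reduce to showing $(3)\Rightarrow\operatorname{asdim}X\le 0$, feed the ball cover $\{B(x,\tilde r,t)\}$ with $\tilde r>r$ into condition $(3)$, and use the multiplicity-$1$ (hence disjoint) refinement together with the containment $B(v,\tilde r,t)\subseteq V$ to conclude $M(v,v',t)\le 1-\tilde r<1-r$. Your write-up is in fact slightly more careful than the paper's in verifying that the ball cover is uniformly bounded and in handling the strictness of the supremum bound.
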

\begin{proof}
By Theorem \ref{bigtheorem}, it is enough to show that condition (\ref{condition3}) implies $\operatorname{asdim}X =0.$ Let $r\in(0,1)$ and $t>0$ be arbitrary. Let $r' \in (0,1)$ be such that $1-r'<1-r$. Consider the uniformly bounded cover $\mathcal{U}=\{B(x,r',t) \mid x \in X\}$. By assumption, there exists a uniformly bounded cover $\mathcal{V}$ of multiplicity $1$ such that $\mathcal{U}$ refines $\mathcal{V}$. This implies that for any $v \in V$, $B(x,r',t)\subseteq V$. This means that for any $v' \in V'$ such that $V \neq V'$, we must have
\[M(v,v',t)\leq 1-r' <1 -r,\]
showing that $\mathcal{V}$ is $(r,t)$-disjoint.
\end{proof}

\begin{conjecture}
Conditions (\ref{condition1}), (\ref{condition2}), and (\ref{condition3}) in Theorem \ref{bigtheorem} are equivalent to the statement that $\operatorname{asdim}X \leq n.$ Consequently, they can be used as alternative definitions of asymptotic dimension of fuzzy metric spaces.
\end{conjecture}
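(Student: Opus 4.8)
The plan is to complete the cycle of implications begun in Theorem~\ref{bigtheorem}. Since that theorem already establishes $(\operatorname{asdim}X\le n)\implies(\ref{condition1})\implies(\ref{condition2})\implies(\ref{condition3})$, it is enough to prove one return arrow, and the most economical target is $(\ref{condition2})\implies(\operatorname{asdim}X\le n)$: granting it, the chain $(\operatorname{asdim}X\le n)\implies(\ref{condition1})\implies(\ref{condition2})\implies(\operatorname{asdim}X\le n)$ shows that $(\ref{condition1})$, $(\ref{condition2})$, and $\operatorname{asdim}X\le n$ are mutually equivalent, while $(\ref{condition3})\implies(\ref{condition2})$ is immediate: given $r\in(0,1)$ and $t>0$, apply $(\ref{condition3})$ to the uniformly bounded cover $\{B(x,r,t)\mid x\in X\}$; the uniformly bounded cover $\mathcal{V}$ of multiplicity $\le n+1$ that $\{B(x,r,t)\}$ refines has $(r,t)$ as a Lebesgue pair, and $r,t$ were arbitrary. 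Thus the whole conjecture reduces to the single statement: \emph{if, for every $r\in(0,1)$ and $t>0$, $X$ admits a uniformly bounded cover of multiplicity $\le n+1$ having $(r,t)$ as a Lebesgue pair, then $\operatorname{asdim}X\le n$.}

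To prove this I would adapt the nerve/barycentric-subdivision argument behind Theorem~19 of \cite{asymptotic_dimension}. Fix the target parameters $\rho\in(0,1)$, $\tau>0$ for which $(\rho,\tau)$-disjoint families are sought, pick a ``large'' scale $t_0$ (a suitable multiple of $\tau$) and a suitable $r_0\in(0,1)$ to be pinned down later, and let $\mathcal{V}$ be a uniformly bounded cover of $X$ with multiplicity $\le n+1$ admitting $(r_0,t_0)$ as a Lebesgue pair. Let $K$ be the nerve of $\mathcal{V}$; multiplicity $\le n+1$ forces $\dim K\le n$. For $V\in\mathcal{V}$ define a ``fuzzy depth'' $\phi_V\colon X\to[0,1]$ by $\phi_V(x)=\inf\{\,1-M(x,z,t_0)\mid z\notin V\,\}$, with the convention that the infimum over the empty set is $1$. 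Then $\phi_V(x)>0$ implies $x\in V$, so $\{V\mid\phi_V(x)>0\}$ spans a simplex of $K$, and the Lebesgue pair forces $\sum_{V}\phi_V(x)\ge r_0>0$ for every $x$; normalizing, $\psi_V=\phi_V/\sum_W\phi_W$, yields the canonical map $\varphi\colon X\to|K|$ into the geometric realization with the $\ell^1$-metric, $\varphi(x)=\sum_V\psi_V(x)\,V$. The two facts I would need are: (a) \emph{$\varphi$ is uniformly cobounded}---in fact $\varphi^{-1}(\operatorname{st}_K(\sigma))\subseteq\bigcap_{V\in\sigma}V$ for every simplex $\sigma$ of $K$, so the $\varphi$-preimage of any subset of $\operatorname{st}_K(\sigma)$ lies inside a single element of $\mathcal{V}$; and (b) \emph{$\varphi$ is ``fuzzy Lipschitz at scale $\tau$''}, meaning $M(x,y,\tau)>1-\rho$ implies $\|\varphi(x)-\varphi(y)\|_1<\eta$, where $\eta=\tfrac1{n+2}$.

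Granting (a) and (b), the conclusion follows from the standard combinatorics of the barycentric subdivision $K'$. Color each vertex $\widehat\sigma$ of $K'$ by $\dim\sigma\in\{0,\dots,n\}$, and for a simplex $\sigma$ of $K$ let $R_\sigma=\{p\in|K|\mid\text{the }K'\text{-barycentric coordinate of }p\text{ at }\widehat\sigma\text{ is }\ge\eta\}$. Then $\{R_\sigma\}$ covers $|K|$, because at each point of $|K|$ at most $n+1$ of the $K'$-barycentric coordinates are nonzero, so the largest is $\ge\tfrac1{n+1}>\eta$; also $R_\sigma\subseteq\operatorname{st}_K(\sigma)$; and for a fixed color $k$, distinct $k$-simplices $\sigma\neq\sigma'$ give $R_\sigma$ and $R_{\sigma'}$ at $\ell^1$-distance $\ge\eta$, since no point of $|K'|$ can have positive $K'$-coordinate at two distinct vertices of the same color (that would require $\sigma\subsetneq\sigma'$ or $\sigma'\subsetneq\sigma$). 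Setting $\mathcal{V}^k=\{\varphi^{-1}(R_\sigma)\mid\dim\sigma=k\}$ for $k=0,\dots,n$: the covering property of the $R_\sigma$ makes $\bigcup_k\mathcal{V}^k$ a cover of $X$; uniform coboundedness (each $\varphi^{-1}(R_\sigma)$ lies inside one element of $\mathcal{V}$) makes each $\mathcal{V}^k$ uniformly bounded; and the $\ell^1$-separation of same-color $R_\sigma$'s combined with (b) makes each $\mathcal{V}^k$ an $(\rho,\tau)$-disjoint family. Thus $\operatorname{asdim}X\le n$.

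The main obstacle is property (b), the quantitative control of $\varphi$. The fuzzy triangle inequality $M(x,z,a+b)\ge M(x,y,a)*M(y,z,b)$ couples the scale parameter, so the natural estimate only bounds $\phi_V$ at scale $t_0-\tau$ (or $t_0+\tau$) at $y$ in terms of $\phi_V$ at scale $t_0$ at $x$; since the depth at scale $t$ is monotone in $t$, this mismatch pushes the inequality the wrong way, and in addition one must reconcile the monoid operation $*$ with the ordinary arithmetic used in the normalization $\psi_V=\phi_V/\sum_W\phi_W$. I expect that pushing this through in full generality needs either a continuity/compactness argument making the scale dependence uniform on the bounded pieces that actually occur, or an extra structural hypothesis on the $t$-norm (a sub-additivity-type bound on $a\mapsto 1-a*b$); this is presumably why the statement is recorded only as a conjecture. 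By contrast, the reductions in the first paragraph and the barycentric combinatorics in the third are routine.
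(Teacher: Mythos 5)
This statement is recorded in the paper as a conjecture, with no proof supplied; your proposal does not close it either, and by your own admission the essential step is missing. What you do establish is correct and worth separating out: the implication $(\ref{condition3})\implies(\ref{condition2})$ does follow by applying $(\ref{condition3})$ to the cover $\{B(x,r,t)\mid x\in X\}$ (which is uniformly bounded thanks to the standing assumption that $a*b\neq 0$ for $a,b\neq 0$), so together with Theorem \ref{bigtheorem} the whole conjecture correctly reduces to the single implication $(\ref{condition2})\implies(\operatorname{asdim}X\leq n)$. The barycentric-subdivision combinatorics in your third paragraph are also sound, and properties of the maps $\phi_V$ that you do verify (that $\phi_V(x)>0$ forces $x\in V$, and that the Lebesgue pair gives $\sum_V\phi_V(x)\geq r_0$) check out.

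The gap is exactly where you locate it: property (b), the quantitative Lipschitz estimate for the normalized map $\varphi$, is asserted but not proved, and the entire argument collapses without it. The difficulty is real, not merely technical bookkeeping: the fuzzy triangle inequality $M(x,z,t_0)\geq M(x,y,\tau)*M(y,z,t_0-\tau)$ relates $\phi_V$ at $x$ at scale $t_0$ to $\phi_V$ at $y$ at scale $t_0-\tau$, and since $M(x,y,\cdot)$ is non-decreasing, $\phi_V^{t_0-\tau}(y)\geq\phi_V^{t_0}(y)$, so the estimate runs in the unusable direction; on top of this, the $\ell^1$-difference $\sum_V|\psi_V(x)-\psi_V(y)|$ requires converting $*$-inequalities into additive ones, which the axioms of a continuous $t$-norm do not provide in general. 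A proof would need either a new idea that makes the scale dependence uniform or an additional hypothesis on $*$, and until that is supplied the statement remains a conjecture rather than a theorem. In short: the reductions are a genuine contribution to organizing the problem, but no proof has been given.
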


\section{Coarse category of fuzzy metric spaces}\label{categorysection}
In this section, we define the coarse category of fuzzy metric spaces. We also show that asymptotic dimension is an invariant in this category.

The reader is encouraged to compare the definitions in this section with the definitions in the metric setting in \cite{permanence}.

\begin{definition}
Let $(X,M_1,*_1)$ and $(Y, M_2,*_2)$ be fuzzy metric spaces. Let $f:X \to Y$ be a map. Then $f$ is called \textbf{uniformly expansive} if for all $A>0$ and $t>0$, there exist $B>0$ and $t'>0$ such that
\[ M_1(x,y,t) \geq A \implies M_2(f(x),f(y),t') \geq B\]
for any $x,y \in X$.
The function $f$ is called \textbf{effectively proper} if for all $C>0$ and $t>0$, there exist $D>0$ and $t'>0$ such that
\[M_2(f(x),f(y),t) \geq C \implies M_1(x,y,t') \geq D\]
for any $x,y \in X$.
The function $f$ is called a \textbf{coarse embedding} if it is both uniformly expansive and effectively proper. The function $f$ is called a \textbf{coarse equivalence} if it is a coarse embedding which is \textbf{coarsely onto} in the sense that there exists $r \in (0,1)$ and $t>0$ such that the $N_{r,t}$-neighborhood of $f(X)$ is $Y$, i.e.,
\[\forall \, y \in Y, \exists \, x \in X \text{ such that } M_2(f(x),y,t)>1-r.\] 
\end{definition}

\begin{definition}
Let $X$ be a set and let $(Y, M,*)$ be a fuzzy metric space. Let $f:X \to Y$ and $g:X\to Y$ be functions. Then $f$ is \textbf{close} to $g$, denoted $f \sim g$, if there exists $r \in (0,1)$ and $t>0$ such that 
\[M(f(x),g(x),t)>1-r\]
for all $x \in X$.
\end{definition}
It is easy to verify that the above defined closeness is an equivalence relation on the set of functions from $X$ to $(Y,M,*)$. We will denote the equivalence class of the function $f$ under the closeness relation by $[f]$. If $X$ is a fuzzy metric space, it is easy to see that if $f$ is close to $g$ and $f$ is uniformly expansive, then $g$ is also uniformly expansive.

\begin{theorem}
The collection of fuzzy metric spaces together with the closeness classes of uniformly expansive maps between the fuzzy metric spaces forms a category. We will call this category the \textbf{coarse category of fuzzy metric spaces}.
\end{theorem}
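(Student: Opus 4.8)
The plan is to exhibit the data of a category and then verify the axioms, treating almost all of it as a routine diagram chase with exactly one substantive point. The objects are fuzzy metric spaces; the morphisms from $(X,M_1,*_1)$ to $(Y,M_2,*_2)$ are the closeness classes $[f]$ of uniformly expansive maps $f\colon X\to Y$ — this is a well-defined set of maps because, as already observed, any map close to a uniformly expansive map is itself uniformly expansive. Composition is defined by $[g]\circ[f]:=[g\circ f]$ and the identity morphism of $X$ is $[\operatorname{id}_X]$. I would then check, in order: (a) the composite of two uniformly expansive maps is uniformly expansive, so that $[g\circ f]$ is a legitimate morphism; (b) this composition does not depend on the chosen representatives; (c) $[\operatorname{id}_X]$ is a two-sided identity; (d) composition is associative.

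For (a) I would simply chase the definition: given $A,t>0$, feed them into uniform expansiveness of $f$ to get $B,t'>0$, then feed $B,t'$ into uniform expansiveness of $g$ to get $B'',t''>0$, and compose the two implications to conclude $M_1(x,y,t)\geq A\implies M_3(g(f(x)),g(f(y)),t'')\geq B''$ for all $x,y\in X$. For (c) and (d) nothing really needs to be proved beyond the observations that $\operatorname{id}_X$ is trivially uniformly expansive (take $B=A$ and $t'=t$) and that $\operatorname{id}_Y\circ f=f=f\circ\operatorname{id}_X$ and $(h\circ g)\circ f=h\circ(g\circ f)$ already hold at the level of maps, so the corresponding class-level identities are immediate from the definition of composition.

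The substantive step is (b), and this is where I expect the only real (though still mild) obstacle. Given $f\sim f'$ as maps $X\to Y$ and $g\sim g'$ as maps $Y\to Z$, all uniformly expansive, I want $g\circ f\sim g'\circ f'$; I would split this as $g\circ f\sim g\circ f'\sim g'\circ f'$ and invoke transitivity of closeness, which is already known. The second link is immediate: closeness of $g$ and $g'$, specialized to the points $f'(x)$, gives $g\circ f'\sim g'\circ f'$ with no hypotheses on $f'$. The first link is the delicate one: from $M_2(f(x),f'(x),t)>1-r$ for all $x$ I would apply uniform expansiveness of $g$ with threshold $A=1-r>0$ to obtain $B>0$ and $t''>0$ with $M_3(g(f(x)),g(f'(x)),t'')\geq B$ for all $x$, and then choose $s\in(0,1)$ with $1-s<B$ to conclude $M_3(g(f(x)),g(f'(x)),t'')>1-s$ for all $x$, i.e.\ $g\circ f\sim g\circ f'$. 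In other words, "pushing forward" closeness along a map requires that map to be uniformly expansive, which is exactly why the morphisms are taken to be classes of uniformly expansive maps; I would also note in passing that every composite appearing here is uniformly expansive by step (a), so every closeness class written down is genuinely a morphism. Once (a)–(d) are in hand, all the category axioms are verified.
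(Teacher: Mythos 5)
Your proposal is correct and follows essentially the same route as the paper: the only substantive check is well-definedness of composition, carried out by pushing the closeness of $f$ and $f'$ forward through uniform expansiveness of $g$ (the paper uses $g'$ instead) and combining this with the closeness of $g$ and $g'$ via the fuzzy triangle inequality and the standing assumption that the $t$-norm has no zero divisors. The paper merges your two links $g\circ f\sim g\circ f'\sim g'\circ f'$ into a single triangle-inequality estimate rather than invoking transitivity of closeness, but this is the same computation packaged slightly differently.
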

\begin{proof}
We only need to verify that the composition of morphisms $[g] \circ [f] = [g \circ f]$ is well-defined, as then the associativity and the existence of the identity morphism for each object are immediate. Let $(X,M_1,*_1)$, $(Y,M_2,*_2)$ and $(Z,M_3,*_3)$ be fuzzy metric spaces. Let $f,f':X\to Y$ be functions such that $f \sim f'$ and $g,g':Y \to Z$ be functions such that $g \sim g'$. We need to show that $g \circ f \sim g' \circ f'$. Since $f \sim f'$, there exists $r \in (0,1)$ and $t>0$ such that $M_2(f(x),f'(x),t)>1-r$ for any $x \in X$. Let $x \in X$ be arbitrary. Then $M_2(f(x),f'(x),t)>1-r$. Since $g\sim g'$, there exists $r'\in(0,1)$ and $t'>0$ such that $M_3(g(y),g'(y),t')>1-r'$ for any $y \in Y$. In particular, $M_3(g(f(x)),g'(f(x)),t')>1-r'$. Since $g'$ is uniformly expansive, there exist $B>0$ and $t''>0$ such that $M_2(y_1,y_2,t) \geq 1-r$ implies $M_3(g'(y_1),g'(y_2),t'') \geq B$ for any $y_1, y_2 \in Y$. In particular $M_3(g'(f(x)),g'(f'(x)),t'') \geq B$, since  $M_2(f(x),f'(x),t)>1-r$. Without loss of generality we can assume that $B<1$. Putting it all together, we get
\begin{equation*}
\begin{split}
M_3(g(f(x)), g'(f'(x)),t'+t'') & \geq M_3(g(f(x)), g'(f(x)), t') *_3 M_3(g'(f(x)), g'(f'(x)),t'') \\
& \geq (1-r') *_3 B
\end{split}
\end{equation*}
Since the right side of this inequality is positive, there exists $s \in (0,1)$ such that 
\[M_3(g(f(x)), g'(f'(x)),t'+t'')>1-s.\]
Since $t',t'',$ and $s$ did not depend on the choice of $x$ in $X$, this shows that $g \circ f \sim g' \circ f'$.
\end{proof}

The following proposition shows that coarse equivalences are the isomorphisms in the coarse category of fuzzy metric spaces. The reader is encouraged to compare the following proposition with Lemma 2.2 in \cite{permanence}.

\begin{proposition}\label{different_way}
Let $(X,M_1,*_1)$ and $(Y, M_2,*_2)$ be fuzzy metric spaces. Let $f:X \to Y$ be a function. Then $f$ is a coarse equivalence if and only if $f$ is uniformly expansive and there exists a uniformly expansive $g: Y\to X$ such that the compositions $f \circ g$ and $g \circ f$ are close to the identity of $Y$ and $X$, respectively. The function $g$ is called a \textbf{coarse inverse} of $f$.
\end{proposition}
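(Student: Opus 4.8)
The plan is to prove the two implications separately, in each case reducing everything to the defining properties (uniformly expansive, effectively proper, coarsely onto), the fuzzy triangle inequality (axiom (4) of Definition \ref{fuzzydef}), and the standing assumption that $a * b \neq 0$ whenever $a \neq 0$ and $b \neq 0$.

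\emph{Forward direction.} Suppose $f$ is a coarse equivalence. Being a coarse embedding, $f$ is already uniformly expansive, so only the existence of $g$ requires argument. Fix $r_0 \in (0,1)$ and $t_0 > 0$ witnessing that $f$ is coarsely onto, and for each $y \in Y$ use the axiom of choice to pick $g(y) \in X$ with $M_2(f(g(y)), y, t_0) > 1 - r_0$; this gives $f \circ g \sim \operatorname{id}_Y$ immediately. For $g \circ f \sim \operatorname{id}_X$, apply effective properness of $f$ with threshold $C = 1 - r_0$ at parameter $t_0$ to the pair $(g(f(x)), x)$, whose $f$-images are $(1 - r_0)$-close at $t_0$; this produces $D > 0$ and $t' > 0$ with $M_1(g(f(x)), x, t') \geq D$ for all $x$, and any $s \in (0,1)$ with $1 - s < D$ finishes. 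For uniform expansiveness of $g$, given $A > 0$ and $t > 0$, first use the fuzzy triangle inequality (inserting $f(g(y_1))$ and $f(g(y_2))$) together with monotonicity of $*$ to deduce that $M_2(y_1, y_2, t) \geq A$ forces
\[
M_2\big(f(g(y_1)), f(g(y_2)), 2t_0 + t\big) \geq (1 - r_0) * A * (1 - r_0),
\]
whose right-hand side is some $A' > 0$ by the $t$-norm hypothesis; then feed $A'$ and $2t_0 + t$ into effective properness of $f$ to obtain $B > 0$ and $t'' > 0$ with $M_1(g(y_1), g(y_2), t'') \geq B$.

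\emph{Backward direction.} Suppose $f$ is uniformly expansive and such a uniformly expansive $g$ is given; fix $r \in (0,1), t_1 > 0$ with $M_2(f(g(y)), y, t_1) > 1 - r$ for all $y$, and $r' \in (0,1), u > 0$ with $M_1(g(f(x)), x, u) > 1 - r'$ for all $x$. Since $f$ is assumed uniformly expansive, it remains only to check that it is effectively proper and coarsely onto. Coarse-ontoness is immediate: for $y \in Y$, take $x = g(y)$. For effective properness, given $C > 0$ and $t > 0$, uniform expansiveness of $g$ supplies $B > 0$ and $s > 0$ with $M_2(y_1, y_2, t) \geq C \implies M_1(g(y_1), g(y_2), s) \geq B$; then whenever $M_2(f(x_1), f(x_2), t) \geq C$, the chain
\[
M_1(x_1, x_2, 2u + s) \geq M_1(x_1, g(f(x_1)), u) * M_1(g(f(x_1)), g(f(x_2)), s) * M_1(g(f(x_2)), x_2, u) \geq (1 - r') * B * (1 - r')
\]
(using symmetry and the triangle inequality of $M_1$) yields a positive lower bound $D$, again by the $t$-norm hypothesis, so $f$ is effectively proper with $t' = 2u + s$.

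None of these steps is deep, and the only genuine content — the place most prone to slips — is the verification that $g$ is uniformly expansive in the forward direction: it is the one spot that really combines coarse-ontoness, effective properness, and the fuzzy triangle inequality at once, and one must thread the coarse-onto parameter $t_0$ through correctly and invoke the $t$-norm hypothesis precisely where needed so the resulting bound stays positive. The remaining care-points are routine: the use of the axiom of choice to define $g$, the upgrade from the bound $M_1 \geq D$ with $D > 0$ to $M_1 > 1 - s$ with $s \in (0,1)$, and the harmless reorderings in the triangle-inequality chains via symmetry of the $M_i$.
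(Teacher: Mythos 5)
Your proposal is correct and follows essentially the same route as the paper's proof: defining $g$ via a choice of coarse preimages, deducing uniform expansiveness of $g$ from the triangle inequality plus effective properness of $f$, and in the converse deriving effective properness of $f$ from uniform expansiveness of $g$ together with the closeness of $g\circ f$ to the identity. The only differences are cosmetic (you are slightly more explicit about which parameters feed into effective properness and about the positivity upgrade from $D>0$ to $1-s$), so there is nothing to flag.
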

\begin{proof}
Let $f$ be a coarse equivalence. Since $f$ is coarsely onto, there exist $r\in(0,1)$ and $t>0$ such that for all $y \in Y$, there exists $x \in X$ such that $M_2(f(x),y,t)>1-r$. Consequently, we can define $g:Y \to X$ by
\[g(y)= \text{some $x$ satisfying $M_2(f(x),y,t)>1-r$}.\]
To see that $g$ is uniformly expansive, Let $A>0$, $t'>0$ be arbitrary, and assume that $M_2(y_1,y_2,t') \geq A$ for some $y_1,y_2 \in Y$. Let $g(y_1)=x_1$ and $g(y_2)=x_2$. Then we have
\begin{equation*}
\begin{split}
M_2(f(x_1), f(x_2),t'+2t) & \geq M_2(f(x_1),y_1,t)*_2 M_2(y_1,y_2,t')*_2M_2(y_2,f(x_2),t)\\
& \geq (1-r) *_2 A *_2(1-r)
\end{split}
\end{equation*}
Since the right side of this inequality is positive, the effective properness of $f$ implies that there exist $D>0$ and $t''>0$ such that $M_1(x_1,x_2,t'')\geq D$, which is the same as $M_1(g(y_1),g(y_2),t'')\geq D$. This shows the uniform expansiveness of $g$. To see that $f \circ g$ is close to the identity of $Y$, let $y \in Y$. Notice that by the definition of $g$, $M_2(f(g(y)),y,t)>1-r$, i.e., $f \circ g$ is close to the identity of $Y$. To show that $g \circ f$ is close to the identity of $X$, let $x \in X$. Notice that by the definition of $g$, we have that $M_2(f(x), f(g(f(x))), t)>1-r$. The effective properness of $f$ implies then that there exists $D'>0$ and $t'''>0$ such that $M_1(x, f(g(x)),t''')\geq D'$. This means that there exists $s \in (0,1)$ such that $M_1(x, f(g(x)),t''')> 1-s$, showing that $f \circ g$ is close to the identity of $X$.

To see the converse, assume that there exists a uniformly expansive $g: Y\to X$ such that the compositions $f \circ g$ and $g \circ f$ are close to the identity of $Y$ and $X$, respectively. To see that $f$ is coarsely onto, let $y \in Y$. Since $f \circ g$ is close to the identity on $Y$, there exists $r\in(0,1)$ and $t>0$ such that $M_2(y, f(g(y)),t)>1-r$. Said differently, $f(g(y))$ is an element of the image of $f$ that is in the $N_{r,t}$-neighborhood of $y$. In other words, $f$ is coarsely onto. To see that $f$ is effectively proper, let $C>0$ and $t>0$ be arbitrary, and let $x_1,x_2$ be elements of $X$ such that $M_2(f(x_1), f(x_2),t)>1-r$.
By the uniform expansiveness of $g$, there exists $B>0$ and $t'>0$ such that $M_1(g(f(x_1)),g(f(x_2)),t')\geq B$. Since $g \circ f$ is close to the identity on $X$, there exists $r''\in(0,1)$ and $t''>0$ such that $M_1(g(f(x_1)),x_1,t'')>1-r''$ and $M_1(g(f(x_2)),x_2,t'')>1-r''$. Thus, we have
\begin{equation*}
\begin{split}
M_1(x_1,x_2,2t''+t') & \geq M_1(x_1, g(f(x_1)),t'') *_1 M_1(g(f(x_1)), g(f(x_2)),t') *_1 M_1(g(f(x_2)), x_2,t'')\\
& \geq (1-r'') *_1 B *_1(1-r'')
\end{split}
\end{equation*}
Since the right side of the above inequality is positive, there exists $s \in (0,1)$ such that 
\[M_1(x_1,x_2,2t''+t')>1-s,\]
showing that $f$ is effectively proper. This finishes the proof that $f$ is a coarse equivalence.
\end{proof}

\begin{corollary}
Let $(X,M_1,*_1)$ and $(Y, M_2,*_2)$ be fuzzy metric spaces. Let $f:X \to Y$ be a coarse equivalence with a coarse inverse $g: Y\to X$. Then $g$ is also a coarse equivalence with a coarse inverse $f$. \hfill $\square$
\end{corollary}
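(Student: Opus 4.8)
The plan is to read off the result directly from Proposition \ref{different_way}, whose characterization of a coarse equivalence is visibly symmetric in the map and its coarse inverse. First I would record what the hypothesis gives us via the forward implication of that proposition: since $f \colon X \to Y$ is a coarse equivalence with coarse inverse $g \colon Y \to X$, we know that $f$ is uniformly expansive, that $g$ is uniformly expansive, that $f \circ g$ is close to the identity of $Y$, and that $g \circ f$ is close to the identity of $X$.

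Next I would apply Proposition \ref{different_way} again, but now with the roles of $X$ and $Y$ interchanged, taking $g \colon Y \to X$ as the map under consideration and $f \colon X \to Y$ as its candidate coarse inverse. In this relabeled form the proposition says that $g$ is a coarse equivalence (with coarse inverse $f$) provided $g$ is uniformly expansive and there is a uniformly expansive map $X \to Y$ — namely $f$ — whose two composites with $g$ are close to the respective identities. Every one of these requirements is among the four facts recorded in the previous paragraph: $g$ uniformly expansive, $f$ uniformly expansive, $g \circ f \sim \operatorname{id}_X$, and $f \circ g \sim \operatorname{id}_Y$. Hence $g$ is a coarse equivalence with coarse inverse $f$.

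There is no real obstacle here; the only thing to be careful about is the bookkeeping of which composite plays the role of which identity when the labels are swapped, so that the hypotheses of the relabeled Proposition \ref{different_way} are matched exactly to the conclusions of its original application. Once that is checked, the proof is complete in two steps and uses nothing beyond Proposition \ref{different_way}.
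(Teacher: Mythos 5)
Your argument is correct and is exactly the symmetry argument the paper intends (the corollary is left unproved precisely because it follows by applying Proposition \ref{different_way} with the roles of $f$ and $g$ interchanged). The four facts you record are available immediately from the hypothesis and the definition of coarse inverse, and the relabeled proposition then yields the conclusion.
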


In light of the above corollary, two fuzzy metric spaces are called \textbf{coarsely equivalent} if there exists a coarse equivalence between them.

\begin{corollary}
Let $(X,d_1)$ and $(Y,d_2)$ be metric spaces and $(X,M_1,*)$ and $(Y, M_2,*)$ be the corresponding standard fuzzy metric spaces. Then $(X,d_1)$ and $(Y,d_2)$ are coarsely equivalent (in the metric sense) if and only if $(X,M_1,*)$ and $(Y, M_2,*)$ are coarsely equivalent (in the fuzzy metric sense).
\end{corollary}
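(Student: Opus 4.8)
The plan is to show that one and the same map $f\colon X\to Y$ is a coarse equivalence in the metric sense if and only if it is a coarse equivalence in the fuzzy metric sense; the corollary then follows at once, since "$(X,d_1)$ and $(Y,d_2)$ are coarsely equivalent" means exactly that such an $f$ exists, and likewise on the fuzzy side. To do this I would assemble a short dictionary establishing, for an arbitrary map $f\colon X\to Y$ (viewed simultaneously between the metric spaces and between the corresponding standard fuzzy metric spaces), the three equivalences: (i) $f$ is uniformly expansive in the metric sense iff in the fuzzy sense; (ii) $f$ is effectively proper in the metric sense iff in the fuzzy sense; (iii) $f$ is coarsely onto in the metric sense iff in the fuzzy sense. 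Granting (i)--(iii), a metric coarse equivalence is precisely a fuzzy coarse equivalence, which is what we want.

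The engine for all three equivalences is the exact correspondence coming from the defining formula $M(x,y,t)=t/(t+d(x,y))$: besides Lemma \ref{useful_lemma}, the same computation yields the non-strict version $M(x,y,t)\ge A\iff d(x,y)\le\tfrac{(1-A)t}{A}$ for $A\in(0,1)$, and in particular $M(x,y,t)\ge\tfrac12\iff d(x,y)\le t$. For (i), starting from a metric modulus of expansion, given $A\in(0,1)$ and $t>0$ I would rewrite the hypothesis $M_1(x,y,t)\ge A$ as $d_1(x,y)\le\tfrac{(1-A)t}{A}$, apply the metric bound to get $d_2(f(x),f(y))\le S$, fix $t'=1$, and take $B=\tfrac{1}{1+S}$; conversely, given $R>0$ I would set $t=R$ and $A=\tfrac12$ (so $d_1(x,y)\le R$ forces $M_1(x,y,R)\ge\tfrac12$), feed this into fuzzy expansiveness to obtain $M_2(f(x),f(y),t')\ge B$, and read off $d_2(f(x),f(y))\le\tfrac{(1-B)t'}{B}$. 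Statement (ii) is the same argument with the roles of $X$ and $Y$ (and the direction of the implication) interchanged. For (iii), given a metric constant $C$ I would take $t=1$ and any $r$ with $\tfrac{C}{1+C}<r<1$; conversely, given a fuzzy pair $(r,t)$, Lemma \ref{useful_lemma} turns $M_2(f(x),y,t)>1-r$ into $d_2(f(x),y)<\tfrac{rt}{1-r}$, which supplies the metric constant.

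I do not expect a deep obstacle here; the one point requiring genuine care is the bookkeeping around the quantifier ranges in the fuzzy definitions, which quantify $A,B,C,D$ over all of $(0,\infty)$ rather than over $(0,1)$. When such a parameter exceeds $1$ the relevant hypothesis $M(\cdot,\cdot,\cdot)\ge A$ is unsatisfiable, so the implication holds vacuously; when it equals $1$ it forces the two points to coincide and the conclusion is trivial; hence in every argument one may harmlessly restrict to parameters in $(0,1)$ and, after shrinking, to moduli $B,D\in(0,1)$. Once the dictionary is in place the final assembly is one line: a metric coarse equivalence $f$ is, by (i)--(iii), uniformly expansive, effectively proper, and coarsely onto as a map of fuzzy metric spaces, hence a fuzzy coarse equivalence, and conversely, so the existence of one is equivalent to the existence of the other. (Alternatively one could route through Proposition \ref{different_way}, additionally checking that closeness of maps translates between the two settings, but the property-by-property translation above is the more direct route.)
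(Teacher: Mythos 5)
Your argument is correct, and it takes a recognizably different (though closely related) route from the paper's. The paper's proof translates only two notions across the metric/fuzzy divide --- uniform expansiveness and closeness of maps --- both via Lemma \ref{useful_lemma}, and then invokes the coarse-inverse characterization on both sides (Proposition \ref{different_way} in the fuzzy setting and Lemma 2.2 of \cite{permanence} in the metric setting) to conclude; it explicitly declines the direct route you take. You instead build the full dictionary for the three defining properties (uniform expansiveness, effective properness, coarse surjectivity) and show that one and the same map is a coarse equivalence in one sense if and only if it is in the other, which makes the proof self-contained: it needs neither Proposition \ref{different_way} nor the external metric lemma. The price is more bookkeeping, and your handling of the one genuinely delicate point --- that the fuzzy definitions quantify the thresholds $A,B,C,D$ over $(0,\infty)$ rather than $(0,1)$, so one must dispose of the cases $A>1$ (vacuous) and $A=1$ (forces $x=y$) before applying the equivalence $M(x,y,t)\ge A\iff d(x,y)\le\frac{(1-A)t}{A}$ --- is exactly right. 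Your version in fact proves the slightly stronger statement that the identity on maps induces a bijection between metric and fuzzy coarse equivalences $X\to Y$, whereas the paper's phrasing only transfers existence; both yield the corollary.
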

\begin{proof}
By the repeated use of Lemma \ref{useful_lemma}, it is easy to see that $f:X\to Y$ is uniformly expansive in the metric sense if and only if it is uniformly expansive in the fuzzy metric sense. Similarly, by using the same lemma it is easy to see that two maps $f,g:X\to Y$ are close in the metric sense if and only if they are close in the fuzzy metric sense. The conclusion follows then from Proposition \ref{different_way} and the corresponding Lemma 2.2 in \cite{permanence}. For the definitions in the metric sense, the reader is referred to \cite{permanence}.
\end{proof}

In the following theorem, we use the following notation: given a function $f:X\to Y$ and a collection $\mathcal{U}$ of subsets of $X$, by $f(\mathcal{U})$ we mean the collection of images of elements of $\mathcal{U}$ under $f$, i.e., 
\[f(\mathcal{U})=\{f(U) \mid U \in \mathcal{U}\}.\]
Similarly, given a collection $\mathcal{U}$ of subsets of a fuzzy metric space $(X,M,*)$, and given $r\in(0,1)$, $t>0$, by $N_{r,t}(\mathcal{U})$ we mean the collection of $(r,t)$-neighborhoods of elements of $\mathcal{U}$, i.e.,
\[N_{r,t}(\mathcal{U})=\{N_{r,t}(U) \mid U \in \mathcal{U}\}.\]

The reader is encouraged to compare the following Theorem with Proposition 22 in \cite{asymptotic_dimension}.
\begin{theorem}\label{2ndbigtheorem}
Let $(X,M_1,*_1)$ and $(Y, M_2,*_2)$ be fuzzy metric spaces. Let $f:X \to Y$ be a coarse equivalence. Then $\operatorname{asdim}X=\operatorname{asdim}Y.$
\end{theorem}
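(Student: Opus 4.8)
The plan is to prove the two inequalities $\operatorname{asdim}X \leq \operatorname{asdim}Y$ and $\operatorname{asdim}Y \leq \operatorname{asdim}X$ separately; by symmetry (using that a coarse inverse $g$ of $f$ exists by Proposition \ref{different_way} and is itself a coarse equivalence), it suffices to prove one implication of the form: if $\operatorname{asdim}Y \leq n$ then $\operatorname{asdim}X \leq n$, using that $f$ is uniformly expansive and effectively proper. Assume $\operatorname{asdim}Y \leq n$ and fix $r \in (0,1)$ and $t>0$; we must produce $(r,t)$-disjoint families $\mathcal{U}^0,\dots,\mathcal{U}^n$ of subsets of $X$ whose union is a uniformly bounded cover.

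First I would translate the target parameters $r,t$ on the $X$-side into parameters on the $Y$-side using effective properness: since $f$ is effectively proper, there exist $D \in (0,1)$ (write it as $1-r'$ with $r' \in (0,1)$) and $t'>0$ such that $M_2(f(x),f(y),t') \geq 1-r'$ implies $M_1(x,y,t) \geq D$; equivalently, contrapositively, $M_1(x,y,t) < D$ forces $M_2(f(x),f(y),t') < 1-r'$ — this is exactly the mechanism that will turn $(r',t')$-disjointness downstairs into $(r,t)$-disjointness upstairs, after a harmless adjustment of constants so that strict inequalities line up (choose $D$ with $1-r < D$, which is possible since we may shrink $D$). Then, since $\operatorname{asdim}Y \leq n$, pick $(r',t')$-disjoint families $\mathcal{V}^0,\dots,\mathcal{V}^n$ of subsets of $Y$ with $\bigcup_i \mathcal{V}^i$ a uniformly bounded cover of $Y$. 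Pull these back: set $\mathcal{U}^i = \{ f^{-1}(V) \mid V \in \mathcal{V}^i \}$. Because $f$ need not be surjective, the $\mathcal{U}^i$ may fail to cover $X$ only if $\bigcup_i \mathcal{V}^i$ missed part of $f(X)$ — but it is a cover of all of $Y$, so the preimages cover $X$; thus $\bigcup_i \mathcal{U}^i$ is a cover of $X$.

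Next I would check the two required properties of the pulled-back families. For $(r,t)$-disjointness: if $U = f^{-1}(V)$ and $U' = f^{-1}(V')$ with $V \neq V'$ in the same $\mathcal{V}^i$, and $x \in U$, $y \in U'$, then $f(x) \in V$, $f(y) \in V'$, so $M_2(f(x),f(y),t') \leq M_2(V,V',t') < 1-r'$; by the contrapositive form of effective properness above this gives $M_1(x,y,t) < D \leq 1-r$ wait — I need $M_1(x,y,t) \le 1-r$, so I should instead have chosen the constant so that the implication reads: $M_1(x,y,t) \geq 1-r \implies M_2(f(x),f(y),t'') \geq 1-r'$ for suitable $t''$, and then $M_2(f(x),f(y),t'') < 1-r'$ yields $M_1(x,y,t) < 1-r$, hence $M_1(U,U',t) \leq 1-r$, i.e.\ $(r,t)$-disjointness (one can always enlarge $t''$ since $M$ is nondecreasing in the third variable, and choose the $\mathcal{V}^i$ to be $(r',t'')$-disjoint). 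For uniform boundedness: $\bigcup_i \mathcal{V}^i$ is uniformly bounded, so there are $r'' \in (0,1)$, $s>0$ with $M_2(v,w,s) > 1-r''$ for all $v,w$ in any member; then for $x,y \in f^{-1}(V)$ we have $M_2(f(x),f(y),s) > 1-r''$, and uniform expansiveness of $f$ — applied in the effectively proper direction, i.e.\ again effective properness — gives $E \in (0,1)$, $s'>0$ with $M_1(x,y,s') \geq E$ uniformly, so $\bigcup_i \mathcal{U}^i$ is uniformly bounded. This proves $\operatorname{asdim}X \leq n$, and the reverse inequality follows by applying the same argument to the coarse inverse $g$.

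The main obstacle is purely bookkeeping: carefully chaining the $\varepsilon$--$t$ style quantifiers of uniform expansiveness and effective properness so that the constants produced downstairs feed correctly into the constants demanded upstairs, and making sure the direction of each implication (and the strictness of each inequality) is used correctly — in particular that it is \emph{effective properness} of $f$ that does the real work for pulling back disjointness, while uniform expansiveness handles uniform boundedness of the pulled-back cover. No genuinely hard idea is needed beyond Proposition \ref{different_way}; the argument closely parallels Proposition 22 in \cite{asymptotic_dimension}, with $r$-disjointness replaced throughout by $(r,t)$-disjointness and the monotonicity of $M$ in $t$ used freely to absorb additive errors in the time parameter.
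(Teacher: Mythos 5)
Your proof is correct, but it runs in the opposite direction from the paper's. The paper \emph{pushes forward}: starting from $\operatorname{asdim}X\leq n$ it takes the disjoint families $\mathcal{U}^i$ in $X$, shows the images $f(\mathcal{U}^i)$ are uniformly bounded via uniform expansiveness, enlarges them to $(r_1,t_1)$-neighborhoods (using the coarsely-onto hypothesis and Proposition \ref{nbhd}) so that they cover $Y$, and then obtains disjointness of the enlarged images by a somewhat delicate argument: the contrapositive of effective properness combined with continuity and monotonicity of $s\mapsto(1-r_1)*_2 s*_2(1-r_1)$ to select a threshold $\epsilon$, which in turn dictates how to choose the disjointness parameters $(R,T)$ upstairs. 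You instead \emph{pull back}: the preimages $f^{-1}(V)$ of a cover of $Y$ automatically cover $X$, so no neighborhoods and no coarse-onto-ness are needed for this inequality; disjointness pulls back by the contrapositive of uniform expansiveness, and uniform boundedness pulls back by effective properness. This is cleaner (it avoids the continuity-of-$*_2$ step entirely), and it proves the stronger statement that any coarse embedding satisfies $\operatorname{asdim}X\leq\operatorname{asdim}Y$, i.e.\ Corollary \ref{importantcorollary}, directly and without routing through Proposition \ref{subspace}; the coarsely-onto hypothesis enters only through the existence of the coarse inverse $g$ (Proposition \ref{different_way} and its corollary), to which the same argument is applied for the reverse inequality. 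The quantifier order works out because $\operatorname{asdim}Y\leq n$ quantifies over \emph{all} pairs $(r',t'')$, so you may first feed $(1-r,t)$ into uniform expansiveness to obtain $(B,t'')$ and only then request $(r',t'')$-disjoint families downstairs with $1-r'\leq B$.

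Two small repairs. First, your concluding summary inverts the roles of the two properties: in the body you correctly use \emph{uniform expansiveness} (contrapositively) to pull back disjointness and \emph{effective properness} to pull back uniform boundedness, but the last paragraph asserts the opposite assignment; the body is the correct version. Second, since Definition \ref{rtdisjoint} requires the strict inequality $M_1(U,U',t)<1-r$ for a supremum, you should run the expansiveness step with $A=1-s$ for some $s\in(r,1)$, so that each pair satisfies $M_1(x,y,t)<1-s$ and hence the supremum is at most $1-s<1-r$; this is exactly the ``harmless adjustment of constants'' you anticipated, and it closes the $\leq$ versus $<$ gap.
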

\begin{proof}
Assume $\operatorname{asdim}X \leq n$. Let $r\in (0,1)$ and $t>0$ be arbitrary. Since $f$ is coarsely onto, there exist $t_1>0$ and $r_1\in(0,1)$ such that for any $y \in Y$, there exists $x \in X$ such that $M_2(f(x),y,t_1)>1-r_1$. Since $\operatorname{asdim}X \leq n$, there exist uniformly bounded $(R,T)$-disjoint families $\mathcal{U}^0, \dots, \mathcal{U}^n$ of subsets of $X$ such that $\bigcup_i \mathcal{U}^i$ is a cover of $X,$ where $R \in (0,1)$ and $T>0$ are yet to be determined. Consider the families $f(\mathcal{U}^0), \dots, f(\mathcal{U}^n).$ To see that these families are uniformly bounded, let $f(x),f(y) \in f(U) \in f(\mathcal{U}^i)$ for some $x,y \in U$ and some integer $i$ such that $0 \leq i \leq n$. Since the families $\mathcal{U}^0, \dots, \mathcal{U}^n$ are uniformly bounded, we know that there are parameters $r'\in(0,1)$ and $t'>0$ such that $M(x_1,x_2,t')>1-r'$ for any $x_1,x_2 \in U \in \mathcal{U}^i$ for any integer $i$ such that $0 \leq i \leq n.$ In particular, $M(x,y,t')>1-r'$.  The uniform expansiveness of $f$ implies that there exists $t''>0$ such that $M_2(f(x),f(y),t'')\geq B$ for some $B>0$. Thus, after letting $r'' \in (0,1)$ be such that $1-r'' < B$ , we see that 
\[M_2(f(x),f(y),t'')\geq B > 1-r''.\]
Since $f(x)$ and $f(y)$ were arbitrary elements of an arbitrary set $f(U)$ belonging to an arbitrary family $f(\mathcal{U}^i)$, this shows that $f(\mathcal{U}^0), \dots, f(\mathcal{U}^n)$ are uniformly bounded.

Consider the families $N_{r_1,t_1}(f(\mathcal{U}^0)), \dots , N_{r_1,t_1}(f(\mathcal{U}^n))$. Since $f$ is coarsely onto with parameters $r_1$ and $t_1$, $N_{r_1,t_1}(f(\mathcal{U}^0)), \dots , N_{r_1,t_1}(f(\mathcal{U}^n))$ cover $Y$. By Proposition \ref{nbhd}, they are also uniformly bounded. It is left to show that the families $N_{r_1,t_1}(f(\mathcal{U}^0)), \dots ,N_{r_1,t_1}(f(\mathcal{U}^n))$ are $(r,t)$-disjoint. Let $x \in N_{r_1,t_1}(f(U)) \in N_{r_1,t_1}(f(\mathcal{U}^i))$ and $y \in N_{r_1,t_1}(f(V)) \in N_{r_1,t_1}(f(\mathcal{U}^i))$ for some integer $i$, where $0 \leq i \leq n$, such that $N_{r_1,t_1}(f(U))\neq N_{r_1,t_1}(f(V))$. In particular, $U \neq V$. Since $x \in N_{r_1,t_1}(f(U))$, there exists $a \in f(U)$ such that $M_2(a,x,t_1)>1-r_1$. Similarly, since  $y \in N_{r_1,t_1}(f(V))$, there exists $b \in f(V)$ such that $M_2(b,y,t_1)>1-r_1$. Thus, we have:
\begin{equation*}\label{eq1}
\begin{split}
M_2(a,b,2t_1+t) & \geq M_2(a,x, t_1) *_2M_2(x,y, t) *_2M_2(y,b, t_1) \\
& \geq (1-r_1) *_2M_2(x,y, t) *_2(1-r_1)\\
\end{split}
\end{equation*}
Since $a \in f(U),$ there exists $a' \in U$ such that $f(a')=a$. Similarly, since $b \in f(V)$, there exists $b' \in V$ such that $f(b')=b.$ Notice that by the effective properness, the left side of the above inequality can be chosen to be as small as one likes. This is because the contrapositive of the implication in the definition of effective properness says that for all $C>0$ and $2t_1+t>0$, there exists $D>0$ and $t_2>0$ such that $M_1(a',b',t_2)<D$ implies $M_2(a,b,2t_1+t)<C$, and we can make $M_1(a',b',t_2)$ arbitrarily small by the $(R, T)$-disjointness of the family $\mathcal{U}^i,$ since $R\in (0,1)$ and $T>0$ are still to be chosen. Since $(1-r_1) *_2 s *_2(1-r_1)$ is a non-decreasing continuous function of $s \in [0,1]$ such that
\begin{itemize}
\item $(1-r_1) *_2 0 *_2(1-r_1)=0$,
\item $(1-r_1) *_2 1 *_2(1-r_1)=(1-r_1)*_2(1-r_1)>0,$
\end{itemize}
there exists $\epsilon \in (0,1)$ such that if $(1-r_1) *_2 s *_2(1-r_1)\leq \epsilon$, then $s<1-t-l$ for some small $l>0$ such that $1-t-l \in (0,1)$. Thus, let us choose $R \in (0,1)$ and $T>0$ so that $M_2(f(u_1),f(u_2),2t_1+t) \leq \epsilon$ for any $u_1, u_2$ belonging to distinct sets from the same family $\mathcal{U}^i$ for any $i$. In particular, $M_2(a,b,2t_1+t) \leq \epsilon$. This gives
\begin{equation*}
\begin{split}
\epsilon & \geq M_2(a,b,2t_1+t)\\
& \geq M_2(a,x, t_1) *_2M_2(x,y, t) *_2M_2(y,b, t_1) \\
& \geq (1-r_1) *_2M_2(x,y, t) *_2(1-r_1),\\
\end{split}
\end{equation*}
which shows that $M_2(x,y, t)<1-t-l<1-t$. Since $x$ was an arbitrary element of $N_{r_1,t_1}(f(U))$, $y$ was an arbitrary element of $N_{r_1,t_1}(f(V))$, $N_{r_1,t_1}(f(U))\neq N_{r_1,t_1}(f(V))$, and both $N_{r_1,t_1}(f(U))$ and $N_{r_1,t_1}(f(V))$ were arbitrary elements of an arbitrary family $N_{r_1,t_1}(f(\mathcal{U}^i))$ for some integer $i$ where $0 \leq i \leq n$, this shows that the the families 
\[N_{r_1,t_1}(f(\mathcal{U}^0)), \dots , N_{r_1,t_1}(f(\mathcal{U}^n))\]
are $(r,t)$-disjoint, finishing the proof that 
\[\operatorname{asdim}X \leq n \implies \operatorname{asdim}Y \leq n.\]
The same proof applied to the coarse inverse of $f$ implies that 
\[\operatorname{asdim}Y \leq n \implies \operatorname{asdim}X \leq n. \qedhere\]
\end{proof}

\begin{corollary}\label{importantcorollary}
Let $(X,M_1,*_1)$ and $(Y, M_2,*_2)$ be fuzzy metric spaces. Let $f:X \to Y$ be a coarse embedding. Then $\operatorname{asdim}X \leq \operatorname{asdim}Y.$ \hfill $\square$
\end{corollary}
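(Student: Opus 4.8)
The plan is to factor the coarse embedding $f$ through its image and then combine two results already established above. Equip the image $f(X) \subseteq Y$ with the subspace fuzzy metric (the restriction of $M_2$ to $f(X) \times f(X) \times (0,\infty)$, with the same $t$-norm $*_2$), and let $\bar f \colon X \to f(X)$ denote the map $f$ with its codomain corestricted to $f(X)$. The strategy is to show that $\bar f$ is a coarse equivalence, so that Theorem \ref{2ndbigtheorem} gives $\operatorname{asdim} X = \operatorname{asdim} f(X)$, and then to invoke Proposition \ref{subspace} to get $\operatorname{asdim} f(X) \leq \operatorname{asdim} Y$; chaining these two facts yields $\operatorname{asdim} X \leq \operatorname{asdim} Y$.

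To see that $\bar f$ is a coarse equivalence, I would first observe that uniform expansiveness and effective properness of $\bar f$ are inherited from $f$ essentially for free. Indeed, both conditions only ever evaluate the codomain fuzzy metric at pairs of points of the form $f(x), f(y)$, and on such pairs the subspace fuzzy metric of $f(X)$ agrees with $M_2$; hence the very same constants that witness uniform expansiveness (respectively effective properness) of $f \colon X \to Y$ witness the corresponding property of $\bar f \colon X \to f(X)$. Thus $\bar f$ is a coarse embedding. Moreover $\bar f$ is coarsely onto in the required sense: for any $r \in (0,1)$ and $t > 0$ one has $N_{r,t}(f(X)) = f(X)$, since each $y = \bar f(x)$ satisfies $M_2(\bar f(x), y, t) = 1 > 1 - r$. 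Therefore $\bar f$ is a coarse equivalence.

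With this in hand, Theorem \ref{2ndbigtheorem} applies to $\bar f \colon X \to f(X)$ and gives $\operatorname{asdim} X = \operatorname{asdim} f(X)$. Since $f(X)$ with its subspace fuzzy metric is a subspace of $Y$, Proposition \ref{subspace} gives $\operatorname{asdim} f(X) \leq \operatorname{asdim} Y$. Combining the two, $\operatorname{asdim} X = \operatorname{asdim} f(X) \leq \operatorname{asdim} Y$, which is precisely the assertion of the corollary.

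There is no substantial obstacle in this argument — it is a routine reduction. The only point meriting a moment's care is the verification that restricting the codomain of a coarse embedding to its image leaves the uniformly expansive and effectively proper estimates intact, and this is immediate from the definition of the subspace fuzzy metric. (Alternatively, one could avoid quoting Theorem \ref{2ndbigtheorem} and instead exhibit a coarse inverse of $\bar f$ directly via Proposition \ref{different_way}, but factoring through the image and invoking the two earlier results is the cleaner route.)
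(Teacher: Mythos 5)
Your proposal is correct and follows essentially the same route as the paper's own (omitted) argument: the paper likewise observes that $f$ is a coarse equivalence onto $f(X)$ with the subspace fuzzy metric, applies Theorem \ref{2ndbigtheorem} to get $\operatorname{asdim}X=\operatorname{asdim}f(X)$, and then uses Proposition \ref{subspace} to conclude $\operatorname{asdim}f(X)\leq\operatorname{asdim}Y$. Your extra verification that corestriction preserves the uniform expansiveness and effective properness estimates is a welcome bit of care, but nothing in your approach differs in substance.
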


\section{Examples}\label{examples}
In this section, we explicitly calculate asymptotic dimension of several fuzzy metric spaces.

\begin{example}\label{example0}
Let $(X,M,*)$ be any bounded fuzzy metric space. Then $\operatorname{asdim}X=0$.
\end{example}

\begin{example}\label{example1}

One can apply Proposition 3.9 to the metric spaces with known asymptotic dimension. In particular,  $\operatorname{asdim} \mathbb{R}^n=\operatorname{asdim} \mathbb{Z}^n=n$, where $\mathbb{R}^n$ and $\mathbb{Z}^n$ are endowed with the standard fuzzy metric induced by the Euclidean metric.
\end{example}

\begin{example}\label{example2}
Let $X=\mathbb{N}$. Let $*$ be a continuous $t$-norm defined by $a*b=ab$ for all $a,b \in [0,1]$. Define
\[
M(x,y,t)=
\begin{cases}
1 & \text{if } x=y,\\
\frac{1}{xy} & \text{otherwise},
\end{cases}
\] 
for all $x,y \in X$ and all $t>0$. Then $(X,M,*)$ is a fuzzy metric space such that $\operatorname{asdim}X=0$.
\end{example}
\begin{proof}
It is straightforward to verify that $(X,M,*)$ is a fuzzy metric space. In fact, $(X,M,*)$ is a fuzzy metric space that is not a standard fuzzy metric space for any metric (see Example 2.10 in \cite{uniform}). Let $r \in (0,1)$ and $t>0$ be arbitrary. Find $N$ large enough such that
\[\frac{1}{N+1}< 1-r.\]
Let $U_0=\{n \in X \mid n\leq N\}$. For any $n \in \mathbb{N}$, define $U_n=\{N+n\}$. Clearly the family $\mathcal{U}^0 = \{U_n \mid n \in \{0\}\cup \mathbb{N}\}$ is a cover of $X$. Since $U_0$ is finite and $U_n$ is a singleton set for any $n \in \mathbb{N}$, the cover $\mathcal{U}^0$ in uniformly bounded. To see that the family $\mathcal{U}^0$ is $(r,t)$-disjoint, notice that for any $m,m'\geq N$ such that $m \neq m'$, we have that 
\[\frac{1}{mm'} \leq \frac{1}{N(N+1)}\leq\frac{1}{N+1}<1-r.\]
Thus, all the singleton sets in $\mathcal{U}^0$ are $(r,t)$-disjoint. Also, for any $n \in U_0$ and any $m\geq N+1$, we have
\[\frac{1}{nm}\leq \frac{1}{N+1}<1-r,\]
showing that $U_0$ is $(r,t)$-disjoint from all the singleton sets in $\mathcal{U}^0$. This finishes the proof that the family $\mathcal{U}^0$ is $(r,t)$-disjoint. Since $r \in (0,1)$ and $t>0$ were arbitrary, this shows that $\operatorname{asdim}X=0$.
\end{proof}

Notice that there exists no effectively proper map from the fuzzy metric space in Example \ref{example2} to the fuzzy metric space in Example \ref{example0}. This means that the fuzzy metric space from Example \ref{example2} and the fuzzy metric space from Example \ref{example0} are examples of fuzzy metric spaces that are not coarsely equivalent, but have the same asymptotic dimension.

\begin{example}\label{example3}
Let $X=\mathbb{N}$. Let $*$ be a continuous $t$-norm defined by $a*b=ab$ for all $a,b \in [0,1]$. Define
\[
M(x,y,t)=
\begin{cases}
\frac{x}{y} & \text{if } x\leq y,\\
\frac{y}{x} & \text{if } y \leq x,
\end{cases}
\] 
for all $x,y \in X$ and all $t>0$. Then $(X,M,*)$ is a fuzzy metric space such that $\operatorname{asdim}X=1$.
\end{example}
\begin{proof}
It is easy to see that $(X,M, *)$ is a fuzzy metric space that is not a standard fuzzy metric space for any metric (see Example 2.11 and Remark 2.12 in \cite{main_paper}). To see that $\operatorname{asdim}X\neq 0$, first notice that any bounded set in $X$ must be finite. However, for any $r \in (0,1)$ and $t>0$, we can find an integer $N$ large enough such that for any integer $m \geq 0$, we have that $M(m,m+1,t)=\frac{m}{m+1}>1-r$. Thus, an $(r,t)$-disjoint family of subsets of $X$ must include the set containing all the integers bigger than $N$. Thus, this family cannot be uniformly bounded.

To see that $\operatorname{asdim}X =1$, let $r \in (0,1)$ and $t>0$ be arbitrary. Construct the family $\mathcal{U}$ of subsets of $X$ in the following way:
\begin{align*}
U_0 &= \{1\},\\
U_1 &= \{a_1+i \mid 0 \leq i \leq m_1 \},
\intertext{where $a_1$ is the smallest integer greater than 1 such that $\frac{1}{a_1}<1-r$ and $m_1$ is the smallest non-negative integer such that $\frac{a_1-1}{a_1+m_1+1}<1-r$, and for all $n\geq 2$, define}
U_n &= \{a_n+i \mid 0 \leq i \leq m_n \}
\end{align*}
where $a_n$ is the smallest integer greater than $a_{n-1}+m_{n-1}$ such that $\frac{a_{n-1}+m_{n-1}}{a_n}<1-r$ and $m_n$ is the smallest non-negative integer such that $\frac{a_n-1}{a_n+m_n+1}<1-r$. Let $\mathcal{U}=\{U_n \mid n \geq 0\}$. To create a family $\mathcal{V}$, let $V_0$ consist of all the integers between $U_0$ and $U_1$, $V_1$ consist of all the integers between $U_1$ and $U_2,$ and in general for any $n\geq 2$,
\[V_n=\{n \in X \mid a_n+m_n<n<a_{n+1}\}.\]
Set $\mathcal{V}=\{V_n \mid n \geq 0\}$. Clearly the families $\mathcal{U}$ and $\mathcal{V}$ form a cover of $X$. To finish the proof, we will show that they are also $(r,t)$-disjoint and uniformly bounded.\\ 
\textbf{Claim 1}: The families $\mathcal{U}$ and $\mathcal{V}$ are $(r,t)$-disjoint.\\
To show $(r,t)$-disjointness of families $\mathcal{U}$ and $\mathcal{V}$, we need to show that $M(x,y,t)<1-r$ for any two $x$ and $y$ belonging to different sets in the same family. Notice that for any positive integers $a$ and $b$ such that $a<b$, we have that $M(a,b,t)=\frac{a}{b}\geq \frac{c}{d}=M(c,d,t)$ for any $c\leq a$ and any $d \geq b$. Thus, to check the $(r,t)$-disjointness of the families $\mathcal{U}$ and $\mathcal{V}$, it is enough to show that the largest element of the set $U_n$ ($V_n$, respectively) is $(r,t)$-disjoint from the smallest element of the set $U_{n+1}$ ($V_{n+1}$, respectively) for any non-negative integer $n$. To see that the family $\mathcal{U}$ is $(r,t)$-disjoint, notice that the definition of $a_1$ ensures that the smallest element of $U_1$ (namely $a_1$) is $(r,t)$-disjoint from the largest element of the set $U_{0}$ (namely $1$), since 
\[M(1, a_1,t)=\frac{1}{a_1}<1-r.\]
Similarly, for all $n> 1$ the definition of $a_n$ ensures that the smallest element of $U_n$ (namely $a_n$) is $(r,t)$-disjoint from the largest element of the set $U_{n-1}$ (namely $a_{n-1}+m_{n-1}$), since 
\[M(a_{n-1}+m_{n-1}, a_n,t)=\frac{a_{n-1}+m_{n-1}}{a_n}<1-r.\]
This shows that the family $\mathcal{U}$ is $(r,t)$-disjoint. To see that the family $\mathcal{V}$ is $(r,t)$-disjoint, first assume that all the elements in $\mathcal{V}$ are non-empty, as if some of them are empty, then they satisfy the $(r,t)$-disjointness condition vacuously. Notice that the definition of $m_1$ ensures that   the smallest element of $V_1$ (namely $a_1+m_1+1$) is $(r,t)$-disjoint from the largest element of the set $V_{0}$ (namely $a_1-1$), since
\[M(a_1+m_1+1, a_1-1,t)=\frac{a_1-1}{a_1+m_1+1}<1-r.\]
Similarly, for all $n > 1$ the definition of $m_n$ ensures that the smallest element of $V_n$ (namely $a_n+m_n+1$) is $(r,t)$-disjoint from the largest element of the set $V_{n-1}$ (namely $a_n-1$), since
\[M(a_n+m_n+1, a_n-1,t)=\frac{a_n-1}{a_n+m_n+1}<1-r.\]
This shows that the family $\mathcal{V}$ is also $(r,t)$-disjoint.\\
\textbf{Claim 2}: The families $\mathcal{U}$ and $\mathcal{V}$ are uniformly bounded.\\
We will show that the families $\mathcal{U}$ and $\mathcal{V}$ are uniformly bounded with parameters $r$ and $t$ chosen at the beginning of this proof. To see that the families $\mathcal{U}$ and $\mathcal{V}$ are uniformly bounded, we need to show that $M(x,y,t)>1-r$ for any two $x$ and $y$ belonging to the same set in any of the families. By using the same observation as in Claim 1, it is enough to show that this inequality is true for the largest element of the set $U_n$ ($V_n$, respectively) and the smallest element of the set $U_{n}$ ($V_{n}$, respectively) for any non-negative integer $n$. To see that the family $\mathcal{U}$ is uniformly bounded, recall that for any $n\geq 1$, the smallest element of $U_n$ is $a_n$ and the largest element of $U_n$ is $a_n+m_n$. Also, recall that for any $n\geq 1$, the definition of $m_n$ says that $m_n$ is the smallest non-negative integer such that $\frac{a_{n}-1}{a_n+m_n+1}<1-r$. This means that $\frac{a_{n}-1}{a_n+m_n}\geq 1-r$, implying that
\[M(a_n, a_n+m_n,t)=\frac{a_n}{a_n+m_n}> \frac{a_n-1}{a_n+m_n}\geq 1-r\]
for all $n \geq 1$. Since the set $U_0$ is a singleton set, this shows that the family $\mathcal{U}$ is uniformly bounded.

To see that the family $\mathcal{V}$ is uniformly bounded, recall that for any $n\geq 1$, the smallest element of $V_n$ is $a_n+m_n+1$ and the largest element of $V_n$ is $a_{n+1}-1$. Also, recall that for any $n\geq 2$, the definition of $a_n$ says that $a_n$ is the smallest integer greater than $a_{n-1}+m_{n-1}$ such that $\frac{a_{n-1}+m_{n-1}}{a_n}<1-r$. This means that $\frac{a_{n}+m_{n}}{a_{n+1}}\geq 1-r$ for all $n \geq 1$, implying that 
\[M(a_n+m_n+1,a_{n+1}-1,t)=\frac{a_n+m_n+1}{a_{n+1}-1}>\frac{a_n+m_n}{a_{n+1}-1}\geq 1-r\]
for all $n \geq 1$. To see that $V_0$ is bounded with the same parameters, notice that by the definition of $a_1$ we must have that $\frac{1}{a_1-1}\geq 1-r$, implying that 
\[M(2,a_1-1,t)=\frac{2}{a_1-1} > \frac{1}{a_1-1}\geq 1-r,\]
finishing the proof that the family $\mathcal{U}$ is also uniformly bounded.
\end{proof}

Recall that a fuzzy metric space $(X, M, *)$ is called \textbf{non-Archimedean} if
\[M(x,y,t)*M(y,z,t') \leq M(x,z, \max\{t,t'\})\]
for any $x,y,z \in X$, and any $t,t'>0$, or equivalently,
\[M(x,y,t)*M(y,z,t) \leq M(x,z,t) \]
for any $x,y,z \in X$, and any $t>0$ (see for example \cite{rzeszow}).
The reader is encouraged to compare the following example with Theorem 3 in \citep{rzeszow}.
\begin{example}\label{example4}
Let $(X, M, *)$ be any non-Archimedean fuzzy metric space with a continuous $t$-norm defined by  $a*b=\min\{a,b\}$ for all $a,b \in [0,1]$. Then $\operatorname{asdim}X=0$.
\end{example}
\begin{proof}
Let $r \in (0,1)$ and $t>0$ be arbitrary. Let $\mathcal{U}=\{B(x,r+\varepsilon,t) \mid x \in X\}$ for some small $\varepsilon>0$ such that $r+\varepsilon \in (0,1)$. Clearly $\mathcal{U}$ is uniformly bounded. To see that $\mathcal{U}$ is $(r,t)$-disjoint, let $B(a,r+\varepsilon,t)$ and $B(b,r+\varepsilon,t)$ be arbitrary elements of $\mathcal{U}$ for some $a,b \in X$ such that $a\neq b$. We will show that if $B(a,r+\varepsilon,t)$ and $B(b,r+\varepsilon,t)$ are not $(r,t)$-disjoint, then they are equal. Assume that $B(a,r+\varepsilon,t)$ and $B(b,r+\varepsilon,t)$ are not $(r,t)$-disjoint. This means that there exist $a' \in B(a,r+\varepsilon,t)$ and $b' \in B(b,r+\varepsilon,t)$ such that $M(a',b',t)>1-r-\varepsilon$. But then for an arbitrary point $b'' \in  B(b,r,t)$, we have that
\begin{equation*}
\begin{split}
M(a,b'',t) & \geq M(a,a',t)*M(a',b',t)*M(b',b,t)*M(b,b'',t)\\
& =\min\left\{M(a,a',t),M(a',b',t),M(b',b,t),M(b,b'',t)\right\}\\
& > \min\left\{(1-r-\varepsilon),(1-r-\varepsilon),(1-r-\varepsilon), (1-r-\varepsilon)\right\} \\
& = 1-r-\varepsilon
\end{split}
\end{equation*}
This shows that $b'' \in B(a,r+\varepsilon,t)$. Since $b''$ was an arbitrary element of $B(b,r+\varepsilon,t)$, this shows that $B(a,r+\varepsilon,t) \subseteq B(b,r+\varepsilon,t)$. By symmetry, $B(b,r+\varepsilon,t) \subseteq B(a,r+\varepsilon,t)$.
\end{proof}

\end{document}